\theoremstyle{plain}
\newtheorem{theorem}{Theorem}[section]
\newtheorem{lemma}[theorem]{Lemma}
\newtheorem{prop}[theorem]{Proposition}
\newtheorem{cor}[theorem]{Corollary}
\newtheorem{rem}[theorem]{Remark}
\renewcommand{\b}{\begin{equation}}
\newcommand{\e}{\end{equation}}
\newcommand\C{{\mathbb C}}
\newcommand\R{{\mathbb R}}
\newcommand{\ov}[1]{\overline{ #1}}
\newcommand{\N}{\nabla}
\newcommand{\F}{\mathcal{F}}
\begin{document}
\title[]{Some remarks on Calabi-Yau and Hyper-K\"ahler foliations}
\date{\today}
\author[G.~Habib]{Georges Habib}
\address{Lebanese University \\
Faculty of Sciences II \\
Department of Mathematics\\
P.O. Box 90656 Fanar-Matn \\
Lebanon}
\email[G.~Habib]{ghabib@ul.edu.lb}

\author[L. ~Vezzoni]{Luigi Vezzoni}
\address{Dipartimento di Matematica \\ Universit\`a di Torino\\
Via Carlo Alberto 10\\
10123 Torino\\ Italy} \email[L.~Vezzoni]{luigi.vezzoni@unito.it}
\subjclass[1991]{53C10; 53D10; 53C25}
\keywords{Riemannian foliations, transverse structures, Sasakian manifolds}
\thanks{This work was partially supported by  G.N.S.A.G.A. of I.N.d.A.M. and by the project FIRB \lq\lq Geometria Differenziale e Teoria Geometrica dell Funzioni\rq\rq\,. 
}
\begin{abstract}
We study Riemannian foliations whose transverse Levi-Civita connection $\nabla$ has special  holonomy. In particular, we focus on the case where ${\rm Hol}(\nabla)$ is contained either in ${\rm SU}(n)$ or in ${\rm Sp}(n)$.
We prove a Weitzenb\"{o}ck formula involving complex basic forms on K\"ahler foliations and we apply this formula for pointing out some properties of transverse Calabi-Yau structures. This allows us to prove that links provide examples of compact simply-connected contact Calabi-Yau manifolds.
Moreover, we show that a simply-connected compact manifold with a K\"ahler foliation admits a transverse hyper-K\"ahler structure if and only if it admits a compatible transverse hyper-Hermitian structure. This latter result is  the  \lq \lq foliated version'' of a theorem proved by Verbitsky in \cite{verb}. In the last part of the paper we adapt our results to the Sasakian case, showing in addition that a compact Sasakian manifold has trivial transverse holonomy if and only if it is a compact quotient of the  Heisenberg Lie group.
\end{abstract}

\maketitle
\tableofcontents

\section{Introduction}
Riemannian foliations were introduced by B. Reinhart  in \cite{Rein} and are a natural generalization of Riemannian submersions. Roughly speaking, a {\em Riemannian foliation} on a manifold $M$ is a decomposition of $M$ into submanifolds given by local Riemannian submersions to a base Riemannian manifold $T$ whose metric is invariant by the transition maps.
Riemannian foliations are characterized by the existence of a Riemannian metric on the whole manifold whose restriction to the normal bundle depends only on the transverse variables of a local chart. One of the basic tool for studying the geometry of Riemannian foliations is the holonomy group of the so-called {\em transverse Levi-Civita connection} $\N$. This connection is defined as the pull-back of the Levi-Civita connection of the base manifold by the local submersions. Many additional structures on a foliated manifold can be described in terms of the holonomy group of $\N$. For instance,  {\em transverse K\"ahler structures} are defined as Riemannian foliations having the holonomy group of $\N$ contained in
${\rm U}(n)$, where $q=2n$ is the codimension of the foliation. K\"ahler foliations play an important role in many different geometrical contexts: for instance,
Sasakian structures  and Vaisman metrics induce a K\"ahler foliation.

In this paper, we investigate the geometry of Riemannian foliations having special transverse holonomy. In particular, we focus on the case of a foliated manifold having either  ${\rm Hol}(\N)\subseteq {\rm SU}(n)$ or ${\rm Hol}(\N)\subseteq {\rm Sp}(n)$. The case ${\rm Hol}(\N)\subseteq {\rm SU}(n)$ corresponds to the geometry of {\em Calabi-Yau} foliations, while ${\rm Hol}(\N)\subseteq {\rm Sp}(n)$ to {\em hyper-K\"ahler } foliations. Besides other reasons, our study is motivated by the El Kacimi paper \cite{EKA}  containing the foliated version of the Calabi-Yau theorem. Examples of Calabi-Yau foliations are provided by submersions over Calabi-Yau manifolds, desingularizations of Calabi-Yau orbifolds and contact Calabi-Yau structures (see \cite{TV}); while examples of hyper-K\"ahler foliations can be obtained by considering submersions over hyper-K\"ahler manifolds, desingularizations of   hyper-K\"ahler orbifolds, $3$-cosymplectic structures (see e.g. \cite[Section 13.1]{BGlibro}) and by the connected sum of some copies of $S^2\times S^3$ (see \cite{4,cuadros} and the last paragraph of the present paper).

\medskip
As a first result of the paper, we provide a  Weitzenb\"ock formula for K\"ahler foliations
(see theorem \ref{W} in section  \ref{Weitzenbock}).  This formula allows us to establish some analogies between foliated Calabi-Yau manifolds and classical Calabi-Yau manifolds (see section \ref{sectionSU(n)}). As main result about hyper-K\"ahler foliations, we prove that every simply-connected compact manifold carrying a K\"ahler foliation and a compatible transverse hyper-complex structure actually admits a hyper-K\"ahler foliation. That is the foliated version of a theorem of Verbitsky (see \cite{verb}). A key ingredient in the proof of this last result is the existence and uniqueness of a special connection having skew-symmetric transverse torsion on every manifold carrying a Hermitian foliation.  The existence of this connection in contact metric manifolds was showed by  Friedrich and Ivanov in \cite{FI}. In the last part of the paper we consider Sasakian manifolds. We prove that a transversally flat compact Sasakian manifold is always a compact quotient of the Heisenberg group (see theorem \ref{1}); we point out that some links provide examples of compact simply-connected
contact Calabi-Yau manifolds and we adapt some results proved in the first part to the Sasakian case.

\medskip
% (see theorems \ref{vanishing} and \ref{SUn}). Examples of hyper-K\"ahler foliations are provided by $3$-cosymplectic structures (see e.g. \cite[Section 13.1]{BGlibro}) and by Sasakian structures on the connected sum of copies of $S^2\times S^3$ (see \cite{4,cuadros} and the last paragraph of the present paper).
%In the second part of the paper, we translate our  results to the class of Sasakian manifolds. More precisely, we prove that a transversally flat compact Sasakian manifold is always a compact quotient of the Heisenberg group (see theorem \ref{1}). We also show that cand prove a Sasakian version of theorem \ref{mainluigi}.

\bigbreak\noindent{\it Acknowledgments.} The authors are grateful to Charles P. Boyer, Thomas Brun Madsen,
Gueo Grancharov,  Valentino Tosatti and Misha Verbitsky for useful conversations, suggestions and remarks.

\section{Preliminaries}
In this section, we recall some basic materials about foliations, transverse structures and basic cohomology; we refer to \cite{T, molino, BGlibro} and the references therein for detailed expositions about these topics.
\subsection{Transverse structures on foliations}
Let $M$ be a smooth manifold. A foliation $\mathcal F$ on $M$ of codimension $q$ can be defined as an open cover $\{U_k\} $
of $M$ together with a family of submersions $f_k\colon U_k\to T$ over a manifold $T$ of dimension $q$ (called the base of the foliation) such that whenever $U_j\cap U_k\neq \emptyset$ there exists a diffeomorphism $\gamma_{jk}\colon f_j( U_j\cap U_k)\to f_k(U_j\cap U_k)$ such that
$$
f_{j}=\gamma_{jk}\circ f_k\,.
$$
The basic examples of foliations are provided by global submersions.  A {\em transverse structure} on a foliated manifold $(M,\F)$ is by definition a geometric structure on the base manifold $T$ which is invariant by the transition maps $\gamma_{jk}$. A foliation is called {\em Riemannian} if it admits a transverse Riemannian metric $g_Q$. In contrast to the non-foliated case the existence of a transverse metric is not always guaranteed.  
 Given a foliated manifold $(M,\F)$, we denote by $L$ the subbundle of $TM$ induced by $\F$; by $Q$ the normal bundle $TM/L$; by $\pi\colon TM\to Q$ the natural projection and for a section $X$ of $TM$ we usually set $X_Q:=\pi(X)$. 
 %We will  often make use that the bundle $\Lambda^p(Q)$ of $p$-forms on $Q$ is isomorphic via the pull-back of $\pi$ to the subbundle of $\Lambda^p(M)$ whose sections $\alpha$ satisfy $X\lrcorner \alpha=0$ for every $X\in \Gamma(L)$.   
A transverse metric on $\F$ induces a metric $g_Q$  along the fibers of $Q$  which always satisfies the  so-called {\em holonomy invariant} condition
\begin{equation}\label{bundle-like}
\mathcal{L}_Xg_Q=0
\end{equation}
for every $X\in \Gamma(L)$, where $\mathcal{L}$ denotes the Lie derivative. By using the projection $\pi$ we can regard $g_Q$ as a symmetric tensor on $M$
which can be always ``completed'' to a global metric $g$ on $M$. This means that there exists a Riemannian metric $g$ on $M$ such that
$$
g(X,Y)=g_Q(X_Q,Y_Q)
$$
for every sections $X,Y$ of $L^{\perp}$. Such a global metric $g$ can be explicitly defined by starting from an arbitrary Riemannian metric $g'$ of $M$ and by setting
$$
g(X,Y):=g'(X_L,Y_L)+g_Q(X_Q,Y_Q)
$$
for every $X,Y\in \Gamma(TM)$, where the subscript $L$ here denotes the projection onto $\Gamma(L)$ with respect to $g'$.
%The metric $g_Q$
% can be always ``completed'' to a global metric $g=g_Q+g_L$ by adding a metric on $L$. Every metric $g$ of this type allows us to identify $Q$ with the orthogonal complement of $L$ in $TM$.  Although there is no in general a canonical complement $g_L$ of $g_Q$, we can always assume that $g_L$ is such that $g$ satisfies the {\em holonomy invariant} condition
%$$
%\mathcal{L}_Xg_Q=0
%$$
%for every $X\in \Gamma(L)$, where $\mathcal{L}$ denotes the Lie derivative (see  \cite{Rein,molino}). A metric $g=g_Q+g_L$ satisfying this condition is usually called {\em bundle like}.
On the other hand, given a foliated Riemannian manifold $(M,\F,g)$, the metric $g$ always induces
a metric $g_Q$ along the fibers of $Q:=TM/L\simeq L^{\perp}$. Such $g_Q$ makes $\F$ a Riemannian foliation if and only if it satisfies the {\em holonomy invariant} condition \eqref{bundle-like}.
In this case $g$ is called a {\em bundle-like} metric.
%\textcolor{yellow}{For this reason sometimes we will refer to a Riemannian foliation as a foliated Riemannian manifold whose induced metric $g_Q$
%satisfies \eqref{bundle-like}.}
%Given a Riemannian foliation $\F$ on a manifold $M$ we assume throughout to have a fixed compatible bundle like metric $g$ on $M$ and will refer sometimes to
%a Riemannian foliation as to a triple $(M,\F,g)$.

\medskip
Given a Riemannian foliation  $(\F,g_Q)$ on  a manifold $M$, some special transverse structures can be characterized in terms of the holonomy of the so-called {\em transversal Levi-Civita connection}. This connection is defined  as the unique connection $\nabla$ on $Q$ preserving $g_Q$ and having vanishing transverse torsion, i.e. 
$$
\nabla_{X}Y_Q-\nabla_{X}Y_Q=[X,Y]_Q
$$
for all $X,Y\in \Gamma(TM)$. The connection 
 $\N$ can be defined in an explicit way in terms of the Levi-Civita connection $\nabla^g$ of a bundle-like metric $g$ inducing $g_Q $ by setting
\b
\nabla_{X}s=
\begin{cases}
\begin{array}{ccl}
[X,\sigma(s)]_{Q}& \mbox{if}& X\in \Gamma(L),\\
(\N^g_{X}\sigma(s))_{Q}& \mbox{if} &X\in \sigma(Q),
\end{array}
\end{cases}
\e
for every  $s\in \Gamma(Q)$, where  $\sigma\colon \Gamma(Q)\to \Gamma(L^{\perp})$ is the natural isomorphism (see e.g. \cite{T}). This last description of $\nabla$ does not depend on the choice of the metric $g$. The curvature $R^\nabla$ of $\nabla$ vanishes along the leaves of $\F$ (see \cite{T}): that is, $X\lrcorner R^\nabla=0$ for all $X\in \Gamma(L)$. That allows us to define the {\em transverse Ricci tensor} as  
$$
{\rm Ric^{\nabla}}(s_1,s_2)=\sum_{k=1}^qg_{Q}(R^{\nabla}(\tilde e_k,\tilde s_1)s_2,e_k)
$$
for every $s_1,s_2\in \Gamma(Q)$,
where $\{e_k\}$ is an arbitrary orthonormal frame of $\Gamma(Q)$ and $\tilde e_k$ and $\tilde s_1$ are arbitrary vector fields on $M$ projecting onto $e_k$ and $s_1$, respectively.  

\medskip
Now we recall the definition of the {\em basic cohomology complex.}
A differential form $\alpha$ on a foliated manifold $(M,\F)$ is called {\em basic} if it is constant along the leaves of $\F$, i.e. if it satisfies
$$
X\lrcorner \alpha=0\,,\quad X\lrcorner d\alpha=0\,,
$$
where $X\lrcorner $ denotes the contraction along  $X\in \Gamma(L)$.
It is straightforward to see that the exterior derivative $d$ preserves the set of basic forms $\Lambda_B(M)$ and its restriction $d_B$ to this set is used to define the so-called basic cohomology groups $H^r_B(M)$ (see e.g. \cite{molino, T}). From now until the end of this paragraph, we assume that $M$ compact and $\F$ is transversally oriented. 

\medskip
The orientation of $\F$ induces a {\em basic Hodge star operator}
$$
*_B\colon \Lambda_B^r(M)\to \Lambda_B^{q-r}(M)
$$
in the usual way. Moreover, the so-called {\em characteristic form} $\chi_\F$ is defined as the volume form of the leaves and is locally given by 
$$
\chi_\F(X_1,\dots,X_p)={\rm det}\left( g(X_i,E_j)\right)\,,\mbox{ for } X_r\in \Gamma(TM)
$$
where $\{E_k\}_{k=1,\cdots,p}$ is a local oriented orthonormal frame of $\Gamma(L)$ and $p$ is the rank of $L$. Thus, one may define a natural scalar product on the set of basic forms by setting
\begin{equation}\label{scalarbasic}
(\alpha,\beta)_B:=\int_M\alpha\wedge *_B\beta\wedge\chi_\F
\end{equation}
Let $\delta_B$ be the formal adjoint of $d_B$ with respect to the scalar product \eqref{scalarbasic} and $\Delta_B:=d_B\delta_B+\delta_Bd_B$ the {\em basic Laplacian operator}. Then $\Delta_B$ is a transversally elliptic operator and by \cite{EHS} its kernel has finite dimension. Moreover, in view of the basic Hodge theorem, $H^r_B(M)$ is isomorphic to $\mathcal{H}^r_B(M):=\ker \Delta_B\cap \Lambda_B^r(M)$ \cite{EK-Hc2,K-To15}. Therefore the basic cohomology groups have finite dimensions, but in general they do not satisfy the Poincar\'e duality (see \cite{Ca} for an example). 
This latter is guaranteed when the foliation is {\em taut}, i.e. when the leaves are minimal with respect to a suitable bundle-like metric or, equivalently in view of \cite{Ma}, when the top basic cohomology group $H^{q}_B(M)$ is non-trivial (and then it is $1$-dimensional). 
%A Riemannian foliation $\F$ of codimension $q$ is called {\em transversally oriented} if the real line bundle of $\Lambda^{q}Q$ is trivial and it  is called {\em taut} when its leaves are minimal with respect to a suitable compatible bundle-like metric.
%if its leaves are minimal submanifolds with respect to suitable choice of a bundle-like metric on $M$. The taut condition can be described in many equivalent  ways and ensures Poincar\'e duality of the basic cohomological groups.
% $H_{B}^q(M)$ is non-trivial (and hence it has to  be $1$-dimensional) or requiring the existence of a volume form $\chi$ on the leaves which is $\F$-relatively closed, that is,
%$$
%d\chi(X_1,\dots,X_q,Y) = 0
%$$
%for $X_1,\dots,X_q \in \Gamma(Q)$ (see  \cite{Ma}).

\medskip 
Given a foliated manifold with a bundle-like metric $(M,\F,g),$ the
{\em mean curvature vector field} is defined as
$$
H=\sum_{l=1}^p (\nabla^g_{E_l} E_l)_Q
$$
where $\{E_l\}_{l=1,\cdots,p}$ is an arbitrary orthonormal frame of $\Gamma(L)$. The $1$-form $\kappa$ dual  to  $H$ is usually called {\em the mean curvature form}. Notice that the leaves of $\F$ are minimal with respect to $g$ if and only if $\kappa$ vanishes. In view of \cite{domin}, it is  always possible to find a compatible bundle-like metric $g$ on $M$ whose induced $\kappa$ is basic.  Moreover, when $\kappa$ is basic it is automatically closed
(see \cite{Kamber,T}) and consequently on a compact simply-connected manifold  every Riemannian foliation is taut (see \cite{T}). Moreover, 
the forms $\kappa$ and $\chi_\F$ are related by the following formula arising from \cite{Ru}
\begin{equation}\label{Ru}
 \alpha\wedge d\chi_\F= -\alpha\wedge \kappa \wedge \chi_\F
\end{equation}
holding for every basic form $\alpha$ of degree $q-1$. Hence if $(\F,g_Q)$ is an orientable taut Riemannian
foliation there exists a $p$-form $\chi_\F$ which restricts to a volume along the leaves and satisfies 
$$
\alpha\wedge d\chi_\F=0
$$
for every basic $(q-1)$-form $\alpha$. 

\subsection{Transverse K\"ahler structures}
In this section, we focus on transverse Hermitian and transverse K\"ahler structures.

Let $(M,\F,g_Q)$ be a manifold equipped with a Riemannian foliation. A {\em transverse complex} structure on $(M,\F)$ is an endomorphism $J$ of $Q$ such that
%\begin{eqnarray}
%\label{first} && J^{2}=-{\rm Id}_Q\,,\\
%\label{second} && [JY,JZ]^{Q}=J\left([JY,Z]^{Q}\right)+J\left([Y,JZ]^{Q}\right)+[Y,Z]^{Q}\,,\mbox{ for every } Y,Z\in \Gamma(Q)\,;\\
%\label{third} && [X,JY]^{Q}=J\left([X,Y]^{Q}\right),\mbox{ for every } X\in \Gamma(L)\mbox{ and } Y\in \Gamma(Q)\,.
%\end{eqnarray}
$$
J^2=-{\rm Id}_Q\,,
$$
(i.e. a transverse almost complex structure).
%Condition \eqref{first} says that $J$ is a transverse almost-complex structure on $(M,\F,g_Q)$, while the integrability conditions \eqref{second} and \eqref{third} can be  written in terms of a single equation as
%\b\label{moro}
%[JX,JY]^Q=J\left([JX,Y]\right)+J\left([X,JY]\right)+[X,Y]^Q,
%\e
%for every vector fields $X,Y$ on $\Gamma(TM)$, where $J$ is extended to the whole $TM$ by setting $JX=0$ if $X$ is tangent to $\F$.
The pair $(g_Q,J)$ is said to be a {\em transverse Hermitian} structure if and only if
$$
g_Q(J\cdot,J\cdot)=g_Q(\cdot,\cdot)\,.
$$
In this case, the triple $(\F,g_Q,J)$ is called a {\em Hermitian foliation}. The basic $2$-form $\omega$ obtained as the pull-back to $M$ of the skew-symmetric tensor $g_Q(J\cdot,\cdot)$ is usually called the {\em fundamental form} of the foliation and it is closed if and only if $(g_Q,J)$ is induced by a transverse K\"ahler structure. In analogy to the non-foliated case, the condition $d\omega=0$ writes in terms of the transverse Levi-Civita connection as $\nabla J=0$. 

%\medskip
%On Hermitian foliations, condition \eqref{moro} can be reformulated  in terms of the transverse Levi-Civita connection $\nabla$ by requiring that every section $X\in \Gamma(L)$ and $Y,Z\in \Gamma(Q)$ satisfies
%$$
%(\nabla_X J)Y=0 \,\, \text{and}\,\, (\nabla_{JY} J)Z=J(\nabla_Y J) Z,
%$$
%(see e.g. \cite{moroianu} for the non-foliated case which easily generalizes to our case).
\medskip 
Given a transverse complex structure $J$ on a foliated manifold $(M,\mathcal{F})$, the complexified normal bundle $Q^{\C}:=Q\otimes \C$ splits into the two eigenbundles $Q^{1,0}$ and $Q^{0,1}$ corresponding to the eigenvalues $i$ and $-i$ of $J$ and we have
$$
\Lambda^r(Q^*)\otimes \C=\mathop\bigoplus\limits_{i+j=r}\Lambda^{i,j}(Q)\,.
$$
 Since the space of smooth sections of $\Lambda^r(Q^*)$ is isomorphic to the space of $r$-forms $\alpha$ on $M$ satisfying $X\lrcorner \alpha=0$ for every $X\in \Gamma(L)$, $J$ induces an operator (which we still denote by $J$) on complex basic forms. 
Therefore we have the natural splitting 
$$
\Lambda_B^r(M,\C)=\mathop\bigoplus\limits_{i+j=r}\Lambda_B^{i,j}(M),
$$
and the complex extension of $d_B$ splits as 
$$
d_B=\partial_B+\bar\partial_B
$$
where 
$$
\partial _{B}\colon \Lambda_{B}^{i,j}(M)\rightarrow \Lambda _{B}^{i+1,j}(M)\,,\quad \bar{\partial}_{B}:\Lambda _{B}^{i,j}(M)\rightarrow \Lambda _{B}^{i,j+1}(M)\,.
$$
In analogy to the non-foliated case, we have $\partial _{B}^2=\bar{\partial}_{B}^2=0$, $\partial _{B}\bar{\partial}_{B}+\bar{\partial}_{B}\partial _{B}=0.$ 
%Moreover, if $(\F,g_Q,J)$ is a Hermitian foliation we can express the two operators $\partial _{B}$ and $\bar{\partial}_{B}$ in terms of the transverse Levi-Civita connection $\nabla$ of the foliation as
%$$
%\partial_{B}=\frac{1}{2}\sum_{j=1}^q(e_{j}+iJe_{j})\wedge \nabla_{e_{j}}\,,\quad \bar{\partial}_{B}=\frac{1}{2}\sum_{j=1}^q(e_{j}-iJe_{j})\wedge \nabla_{e_{j}},
%$$
%where $\{e_j\}_{j=1,\cdots,q}$ is a local orthonormal frame of $\Gamma(Q)$.
%where $\Lambda_B^{i,j}(M)$ is the space of basic complex forms of type $(i,j)$ defined as
%$$
%\Lambda_B^{i,j}(M)=\Lambda_B^{i,0}(M)\otimes \Lambda_B^{0,j}(M)
%$$
%where
%$$
%\Lambda_B^{i,0}(M)=\Lambda_B^i(Q^{1,0})\,\, \text{and}\,\, \Lambda_B^{0,j}(M)=\Lambda_B^j(Q^{0,1}).

Moreover, if $(g_Q,J)$ is a transverse K\"ahler structure, the transverse Ricci tensor of $g_Q$ is $J$-invariant and thus induces the {\em transverse Ricci form} $\rho_B$. This latter is the closed basic form obtained as the pull-back of $\frac{1}{2\pi}{\rm Ric}^{\nabla}(J\cdot,\cdot)$ to $\Lambda^2_B(M)$. According to the non-foliated case, one has  
$$
\rho_B(\cdot,\cdot)=-\frac{i}{2\pi}\partial_B\bar\partial_B {\rm log}(G)
$$
where $G={\rm det}(g_{r\bar s})$ and the functions $g_{r\bar s}$ are computed with respect to suitable transverse complex coordinates. The class of $\rho_B$ in $H_B^2(M,\R)$ is by definition the {\em first basic Chern class} of $(\F,J)$ and it is usually denoted by $c_B^1$.
%is a basic closed $2$-form whose cohomology class in $H_B^2(M)$ corresponds to $2\pi c_B^1$, $c_B^1$ being  the first basic Chern class (see \cite{EKA} for the definition).
The following important theorem is due to El Kacimi and provides a foliated version of the celebrated Calabi-Yau theorem:

\begin{theorem}[\cite{EKA}]\label{transvcalabiyau} Let $(M,\mathcal{F},g_Q,J$) be a compact manifold endowed with a taut K\"ahler foliation and let $\omega$ be its fundamental form. If $c_B^1$
is represented by a real basic $(1,1)$-form $\rho'_B$, then $\rho'_B$ is the basic Ricci
 form of a unique transverse K\"ahler form $\omega'$
in the same basic cohomology class
of $\omega$. In particular, if $c_B^1=0$, then there exists a  transverse K\"ahler metric having vanishing transverse Ricci tensor.
\end{theorem}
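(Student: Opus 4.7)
The plan is to adapt Yau's proof of the Calabi conjecture to the basic setting, exploiting the transverse ellipticity of the basic Laplacian $\Delta_B$ and the elliptic regularity theory on compact Riemannian foliations developed by El Kacimi--Hector--Saralegi. Tautness of $\F$ plays the role that compactness plays in the classical Aubin-Yau theory, since it is what licenses integration by parts against the volume element $\chi_\F$ on basic forms via \eqref{Ru}.

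Step 1: reduce the problem to a transverse complex Monge-Amp\`ere equation. Since the foliation is K\"ahler and taut, the basic $\partial_B\bar\partial_B$-lemma holds (via the basic Hodge decomposition together with the transverse K\"ahler identities), so the hypothesis $[\rho_B]=[\rho'_B]=c_B^1$ produces a basic function $F$ with $\rho_B-\rho'_B=\frac{i}{2\pi}\partial_B\bar\partial_B F$. Writing the sought-after K\"ahler form as $\omega'=\omega+i\partial_B\bar\partial_B\varphi$ for a basic potential $\varphi$ and using the formula relating the Ricci form to the logarithm of $\det(g_{r\bar s})$, the problem reduces to finding a basic $\varphi$ with $\omega'$ transversally positive and
\[
(\omega+i\partial_B\bar\partial_B\varphi)^n=e^{F+c}\,\omega^n,
\]
where the constant $c$ is fixed by equating the integrals of both sides against $\chi_\F$.

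Step 2: apply the continuity method to the family
\[
(\omega+i\partial_B\bar\partial_B\varphi_t)^n=e^{tF+c_t}\,\omega^n,\qquad t\in[0,1],
\]
normalised by $\int_M\varphi_t\,\omega^n\wedge\chi_\F=0$. Let $S\subseteq[0,1]$ be the set of $t$ admitting a smooth basic solution. That $0\in S$ is trivial. Openness of $S$ follows from the implicit function theorem on suitable transverse H\"older spaces of basic functions; the linearisation at a solution is, up to the normalising constant, the basic Laplacian of the deformed transverse metric, which by the basic Hodge theorem has zero kernel on mean-zero basic functions. Closedness requires a priori $C^{2,\alpha}$ estimates uniform in $t$, obtained by transferring Yau's classical $C^0$ estimate (Moser iteration), $C^2$ estimate (maximum-principle computation on $\Delta_B\varphi_t$ involving transverse bisectional curvature), and Evans-Krylov estimate to the basic setting; each depends only on transverse ellipticity, the maximum principle, and integration by parts against $\chi_\F$, and so transfers. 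Uniqueness follows by subtracting two solutions, writing the resulting linear equation in divergence form, and pairing with the difference of the potentials to conclude, via tautness, that $\varphi$ is determined up to a constant, whence $\omega'$ is unique.

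The genuine obstacle is the derivation of the a priori $C^0$ and $C^2$ estimates \emph{within} the basic subcomplex. One must verify that every pointwise and integral manipulation in Yau's proof preserves basicness; the device is to start from a bundle-like metric whose mean-curvature form is basic (guaranteed by \cite{domin}) and to exploit that the transverse computations then project consistently to the local leaf spaces. Once this transverse analytic framework is secured, the classical Aubin-Yau argument proceeds essentially without modification, and the special case $c_B^1=0$ is obtained by taking $\rho'_B=0$.
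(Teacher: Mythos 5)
First, note that the paper does not actually prove theorem \ref{transvcalabiyau}: it is imported verbatim from El Kacimi \cite{EKA}, so the only comparison available is with the proof in that reference. Your outline --- basic $\partial_B\bar\partial_B$-lemma, reduction to a transverse complex Monge--Amp\`ere equation for a basic potential, continuity method with openness from the implicit function theorem and closedness from a priori estimates, uniqueness by the Calabi integration-by-parts trick --- is indeed the skeleton of El Kacimi's argument, and the role you assign to tautness (making $\int_M(\cdot)\wedge\chi_\F$ a genuine integration theory on basic forms via \eqref{Ru}, with $\kappa=0$ for a suitable bundle-like metric) is correct.

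The gap sits exactly at the point you flag as ``the genuine obstacle,'' and the device you propose there does not close it. Choosing a bundle-like metric with basic mean curvature and ``projecting consistently to the local leaf spaces'' cannot deliver the uniform $C^0$, $C^2$ and Evans--Krylov estimates: the leaves of a Riemannian foliation are in general dense, the local quotients $f_k(U_k)$ do not assemble into a compact Hausdorff space, and Moser iteration and the maximum principle are global arguments that require an honest elliptic problem on an honest compact manifold, not a transversally elliptic operator whose symbol degenerates along the leaves and which, as an operator on all of $C^\infty(M)$, has infinite-dimensional kernel. El Kacimi's actual mechanism is Molino's structure theory \cite{molino}: one lifts $\F$ to the bundle of transverse orthonormal frames, where the closures of the lifted leaves are the fibres of a fibration over a compact \emph{basic manifold} $W$ carrying an action of a compact Lie group $K$; basic functions and forms on $(M,\F)$ correspond exactly to $K$-invariant objects on $W$, and the transverse Monge--Amp\`ere operator becomes a genuinely elliptic $K$-invariant operator there. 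Yau's estimates, the Schauder theory and the implicit function theorem are then applied on $W$ equivariantly and pulled back. Without this reduction (or an equivalent substitute) the assertion that the classical Aubin--Yau argument ``transfers'' is unsupported; with it, the rest of your outline, including Step 1 and the uniqueness argument, goes through essentially as you describe.
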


\begin{rem}\label{remarkCY}
{\em Let $(M,\F,g_Q,J)$ be a Hermitian foliation of real codimension $2n$. Then the first Chern class of $K:=\Lambda^{n,0}(Q)$ vanishes if and only if there exists a nowhere vanishing $\eta\in\Lambda^{n,0}(Q)$. Such an $\eta$ induces a nowhere vanishing section $\psi$ of $\Lambda^{2n}(M,\C)$ satisfying $X\lrcorner \psi=0 $ for every section $X$ of $L$.  Since  $\psi$ is not necessary basic, condition $c^1(K)=0$ does not imply $c^1_B(\F)=0$ and it is quite natural asking if the existence of a nowhere vanishing basic $(n,0)$-form $\psi$ implies $c^1_B(\F)=0$. This fact is certainly true if $M$ is simply-connected since in this case the same argument as in the non foliated case  (see e.g. \cite{Audin}) allows us to prove that 
$\rho=-i\partial_B\bar \partial_B f$, where $f=g_Q(\eta,\bar \eta)$ is the  pointwise norm of the form $\eta\in \Lambda_B^{n,0}(M)$ corresponding to $\psi$.}  
\end{rem}

An important class of transverse K\"ahler structures is provided by Sasakian manifolds \cite{einstein1}. These latters are characterized by the existence of a unit Killing vector field $\xi$ on a $(2n+1)$-dimensional Riemannian manifold $(M,g)$ such that the tensor field $\Phi$ defined for $X\in \Gamma(TM)$ as $\Phi(X)=\nabla^g_X\xi$ satisfies
\begin{enumerate}
\item[1.] $\Phi^2=-{\rm Id}_{TM}+\xi^\flat\otimes \xi,$

\vspace{0.1cm}
\item[2.] $(\nabla^g_X \Phi)(Y)=g(\xi,Y)X-g(X,Y)\xi,$
\end{enumerate}
where $X, Y$ are vector fields in $\Gamma(TM).$
The vector field $\xi$ is called the {\em Reeb vector field} and generates a $1$-dimensional Riemannian foliation $\mathcal{F}$ such that the restriction of $\Phi$ to $\xi^\perp$ gives a transverse K\"ahler structure with vanishing mean curvature. Usually a Sasakian structure is denoted by a quadruple $(\xi,\eta,\Phi,g)$, where $\eta=\xi^\flat$ is the $1$-form dual to $\xi$.  About the geometry of Sasakian manifolds we refer to \cite{Sas60,SH62, BGlibro} and the references therein, whilst for the Sasakian-version of theorem \ref{transvcalabiyau} we refer to \cite{BGM} and \cite{BGN}.

\medskip
Many interesting examples of non-K\"ahlerian complex spaces carry a K\"ahler foliation. For instance, Vaisman manifolds, Calabi-Eckmann manifolds and some  Oeljeklaus-Toma manifolds carry K\"ahler foliations having the transverse K\"ahler form exact (see e.g. \cite{dragomir, ornea} and the references therein).

\medskip
Another interesting examples of K\"ahler foliations come from the physical study of A-branes (see \cite{K,HO}) and they are obtained by considering a coisotropic submanifold of a K\"ahler manifold.
%(about the geometry of coisotropic submanifolds we refer to and the references therein \cite{}).
A submanifold of $N\hookrightarrow (M,\omega,J,g)$ of a K\"ahler manifold 
is called {\em coisotropic} if 
\begin{equation}\label{coso}
TN^{\omega}\subset TN
\end{equation}
where
$$
\left(T_{y}N\right)^{\omega}=\{v\in  T_yM\,\,:\,\, \omega(v,w)=0\,\,,\mbox{ for all }w\in T_yN \}\,.
$$
The coisotropic condition \eqref{coso} implies that $\F_y:=(T_yN)^{\omega}$ is a distribution on $N$ which is intregrable by the closure of  $\omega$. Moreover, the complex structure $J$ always preserves  the orthogonal complement
of the bundle $L$ induced by $\F$ and we have the following
\begin{prop}\label{coisotropic}
The metric $g$ is always bundle-like with respect to $\F$ and $\F$ is a locally trivial K\"ahler  foliation.
\end{prop}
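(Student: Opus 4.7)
The strategy is to construct transverse objects $(\omega_Q,J_Q,g_Q)$ on $Q:=TN/L$ directly from the ambient Kähler data, prove their holonomy-invariance, and then invoke the local-submersion description of Riemannian foliations to conclude local triviality. Because $L=(TN)^{\omega}$ is by definition the kernel of $\omega|_N$ inside $TN$, the restriction $\omega|_N$ satisfies $X\lrcorner\omega|_N=0$ for every $X\in\Gamma(L)$, and combined with $d\omega=0$ this gives $\mathcal{L}_X(\omega|_N)=0$. Hence $\omega|_N$ is basic and descends to a non-degenerate $2$-form $\omega_Q$ on $Q$. Using the $J$-invariance of $L^{\perp}$ observed in the text, I define $J_Q\colon Q\to Q$ by lifting a class to $L^{\perp}$, applying $J$, and projecting, and set $g_Q$ to be the restriction of $g$ to $L^{\perp}$. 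Compatibility $\omega_Q(\cdot,\cdot)=g_Q(J_Q\cdot,\cdot)$ is inherited pointwise from the Kähler compatibility on $M$, so these three tensors form a transverse almost Hermitian triple.

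The core of the argument is to check $\mathcal{L}_XJ_Q=0$ for $X\in\Gamma(L)$. For $Y$ a section of $L^{\perp}\subset TN$ (representing a section of $Q$), the identity $\N^{g}J=0$ yields
\[
[X,JY]-J[X,Y]=J\N^{g}_{Y}X-\N^{g}_{JY}X,
\]
and I would show that the projection of the right-hand side onto $Q$ vanishes. This combines the Gauss formula $\N^{g}_{U}V=\N^{N}_{U}V+\mathrm{II}(U,V)$ with the relation $JL=TN^{\perp}$ already observed, which forces several terms into directions that the projection $\pi\colon TN\to Q$ annihilates. Once both $\mathcal{L}_X\omega_Q=0$ and $\mathcal{L}_XJ_Q=0$ are in hand, the identity $g_Q(\cdot,\cdot)=\omega_Q(\cdot,J_Q\cdot)$ forces $\mathcal{L}_Xg_Q=0$; this is the bundle-like property, and it says that the holonomy of the transverse Levi-Civita connection $\N$ lies in $\mathrm{U}(n)$, so $(\F,g_Q,J_Q)$ is a Kähler foliation.

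Local triviality then follows from the standard picture: each $p\in N$ has a neighbourhood on which $\F$ is given by the fibres of a submersion onto a manifold of dimension $\dim L^{\perp}$, and the holonomy-invariant transverse structures $(g_Q,J_Q,\omega_Q)$ descend to an honest Kähler structure on that local base. The main obstacle is the computation of $\mathcal{L}_XJ_Q$: the Kähler identity $\N^{g}J=0$ has to be pushed through the decomposition $TM|_N=L\oplus L^{\perp}\oplus JL$ carefully, and it is the interaction of Frobenius integrability of $L$ (which is what makes $\mathcal{L}_X$ well defined on $\Gamma(Q)$ in the first place) with the placement of $JL$ normal to $N$ that makes the projection onto $Q$ vanish.
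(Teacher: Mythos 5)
Your reduction of the statement to the holonomy-invariance of the transverse triple $(g_Q,J_Q,\omega_Q)$ is reasonable, and the first step (that $\omega|_N$ is basic because $L=\ker(\omega|_{TN})$ and $d\omega=0$) is correct. But there is a genuine gap at exactly the point you defer: the claim that $\pi\bigl(J\nabla^g_YX-\nabla^g_{JY}X\bigr)=0$. Pairing with $Z\in\Gamma(L^\perp)$ and using $JL=TN^{\perp}$, $\nabla^gJ=0$ and the Gauss formula, one finds
\[
g\bigl(J\nabla^g_YX-\nabla^g_{JY}X,\,Z\bigr)=g\bigl(JX,\,h(JY,JZ)\bigr)-g\bigl(JX,\,h(Y,Z)\bigr),
\]
where $h$ is the second fundamental form of $N$ in $M$. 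So what you actually need is that the bilinear form $(U,V)\mapsto g(JX,h(U,V))$ restricted to $L^\perp$ is $J$-invariant. Neither $d\omega=0$, nor $\nabla^gJ=0$, nor the placement of $JL$ normal to $N$ yields this: it is an extrinsic, second-order condition on the embedding, and nothing in the expression is "forced into directions annihilated by $\pi$". The same quantity governs the bundle-like property itself, since $(\mathcal{L}_Xg)(X_1,X_2)=g(\nabla^g_{X_1}X,X_2)+g(\nabla^g_{X_2}X,X_1)$ and $g(\nabla^g_{X_1}X,X_2)=-g(JX,h(X_1,JX_2))$.

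To see that the missing step is not merely technical, test it on the real ellipsoid $N=\{(1+\epsilon)x_1^2+(1-\epsilon)y_1^2+x_2^2+y_2^2=1\}\subset\C^2$ (a hypersurface, hence coisotropic), at $p=(0,0,1,0)$, with $X=J\nabla\rho$ and $u=\partial_{x_1}+\partial_{y_1}\in L^\perp_p$: a one-line computation with the flat connection gives $(\mathcal{L}_Xg)(u,u)=8\epsilon$, which is nonzero for $\epsilon\neq0$ and is unaffected by rescaling $X$ because $g(X,u)=0$. For comparison, the paper's own proof is far more direct than your route — it expands $0=(\nabla^g_{X_1}\omega)(X,JX_2)$ and concludes $g(\nabla^g_{X_1}X,X_2)=0$ outright — but it does so by discarding the term $\omega(X,\nabla^g_{X_1}JX_2)=g(JX,h(X_1,JX_2))$, which is exactly the quantity at issue. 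So your gap sits at the same delicate point as the published argument, and it cannot be closed along the lines you sketch without an additional hypothesis on $N$ (for instance the $J$-invariance of the relevant part of $h$, which does hold for the Hermitian ellipsoids $\{|z_1|^2/a+|z_2|^2/b=1\}$); at minimum you must isolate that condition explicitly rather than assert that "several terms" vanish under the projection.
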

\begin{proof}
The metric $g$ is bundle-like if and only if
$$
\mathcal{L}_{X}g(X_1,X_2)=0
$$
for every $X\in \Gamma(L)$ and $X_1,X_2\in \Gamma(L^\perp)$. Such a relation can be read  in terms of
the Levi-Civita connection $\nabla^g$ of $g$ as
$$
g(\nabla^g_{X_1}X,X_2)=-g(\nabla^g_{X_2}X,X_1)\,.
$$
Now it is enough to observe that the K\"ahler condition $\nabla^g\omega=0$ implies
$$
g(\nabla^g_{X_1}X,X_2)=0
$$
since $g(\nabla^g_{X_1}X,X_2)=\omega(\nabla^g_{X_1}X,JX_2)$ and
$$
0=(\nabla^g_{X_1}\omega)(X,JX_2)=X_1\omega(X,JX_2)-\omega(\nabla^g_{X_1}X,JX_2)-\omega(X,\nabla^g_{X_1}JX_2)=
-\omega(\nabla^g_{X_1}X,JX_2)\,.
$$
\end{proof}

\section{A Weitzenb\"{o}ck formula for transverse K\"ahler structures}\label{Weitzenbock}
%\textcolor{red}{[In this section you identify vectors with forms. Would it possible to distinguish them? Maybe by wiring $e_i$ for vectors and $e^i$ for forms? ]}

In this section, we establish a transverse Weitzenb\"{o}ck formula for complex-valued basic forms on K\"ahler foliations and we also derive some vanishing results. We strictly follow the classical computations in the non-foliated case as described in \cite{moroianu}.

\medskip
Let $(M,\mathcal{F},g_Q,J)$ be a compact manifold  with a K\"ahler foliation of codimension $q$ and let $g$ a bundle-like metric on $M$ inducing $g_Q$. We may assume in view of \cite{domin} that the mean curvature form $\kappa$ of the foliation is a basic (otherwise we can work with the basic component of $\kappa$). 
From now until the end of this section, we identify the bundle $Q$ with its dual $Q^*$ and vectors  with  $1$-forms by using the transverse metric.  
%The exterior derivative $d_B$ acting on the space of basic forms induces the two complex  operators
%$$
%\partial _{B}\colon \Lambda_{B}^{i,j}(M)\rightarrow \Lambda _{B}^{i+1,j}(M)\,,\quad \bar{\partial}_{B}:\Lambda _{B}^{i,j}(M)\rightarrow \Lambda _{B}^{i,j+1}(M)\,.
%$$
%In analogy to the non-foliated case, we have $d_B=\partial _{B}+{\bar{\partial}_{B}}$ and  $\partial _{B}^2=\bar{\partial}_{B}^2=0$, $\partial _{B}\bar{\partial}_{B}+\bar{\partial}_{B}\partial _{B}=0.$ 
In analogy to the non-foliated case, the two operators $\partial _{B}$ and $\bar{\partial}_{B}$ can be written in terms of the transverse Levi-Civita connection $\nabla$ of $(\F,g_Q)$ as
$$
\partial_{B}=\frac{1}{2}\sum_{j=1}^q(e_{j}+iJe_{j})\wedge \nabla_{e_{j}}\,\,\text{and}\,\,\bar{\partial}_{B}=\frac{1}{2}\sum_{j=1}^q(e_{j}-iJe_{j})\wedge \nabla_{e_{j}},
$$
where $\{e_j\}_{j=1,\cdots,q}$ is a local orthonormal frame of $\Gamma(Q)$.  
 In \cite{HabRi}, the authors established a Bochner-Weitzenb\"ock formula for the Laplacian operator corresponding to the {\em twisted derivative} $\tilde{d}_B=d_B-\frac{1}{2}\kappa\wedge$. As mentionned before the mean curvature is $d_B$-closed; in particular we have that $\tilde{d}_B^2=0$. In the same spirit as \cite{HabRi}, we modify the operators $\partial_{B}$ and $\bar{\partial}_{B}$ by introducing the following two twisted operators
\begin{equation*}
\tilde{\partial}_{B}=\frac{1}{2}\sum_{j=1}^q(e_{j}+iJe_{j})\wedge \nabla
_{e_{j}}-\frac{1}{4}(\kappa +iJ\kappa )\wedge
\,\,\text{and}\,\,
\tilde{\bar{\partial}}_{B}=\frac{1}{2}\sum_{j=1}^q(e_{j}-iJe_{j})\wedge \nabla
_{e_{j}}-\frac{1}{4}(\kappa -iJ\kappa )\wedge.
\end{equation*}
We readily have that $\tilde{d}_{B}=\tilde{\partial}_{B}+\tilde{\bar{\partial}}_{B}$ and  the formal adjoint  to $\tilde{\bar{\partial}}_{B}$ with respect to the scalar product \eqref{scalarbasic} writes as
\begin{equation*}
\left(\tilde{\bar{\partial}}_{B}\right)^{\ast }=-\frac{1}{2}\sum_{j=1}^q(e_{j}+iJe_{j})
\lrcorner \nabla _{e_{j}}+\frac{1}{4}(\kappa +iJ\kappa )\lrcorner\,.
\end{equation*}
Now we state the main result of this section:
\begin{theorem}\label{W} Let $(M,g,\mathcal{F},J)$ be a compact Riemannian manifold endowed with a K\"ahler foliation. Assume that the mean curvature $\kappa$ is basic-harmonic. Then the following Weitzenb\"{o}ck-type formula  holds
$$
\begin{aligned}
2((\tilde{\bar{\partial}}_{B})^{\ast }\tilde{\bar{\partial}}_{B}+\tilde{\bar{\partial}}_{B}(\tilde{\bar{\partial}}_{B})^{\ast }) =&\,\nabla ^{\ast }\nabla
+\frac{1}{4}|\kappa |^{2}+\mathfrak{R}+\frac{1}{4}(e_{j}-iJe_{j})\wedge(\nabla _{e_{j}}\kappa +iJ(\nabla _{e_{j}}\kappa ))\lrcorner \\
&\,+\frac{i}{2}(Je_{\ell },\nabla _{e_{\ell }}\kappa )-\frac{1}{4}(\nabla_{e_{\ell }}\kappa -iJ(\nabla _{e_{\ell }}\kappa ))\wedge (e_{\ell}+iJe_{\ell }\lrcorner )\,,
\end{aligned}
$$
where the third term $\mathfrak{R}$ is
\begin{equation*}
\mathfrak{R}=\frac{i}{2}R^\nabla(Je_{j},e_{j})-\frac{1}{2}(e_{j}-iJe_{j})\wedge
(e_{\ell }+iJe_{\ell })\lrcorner R^\nabla(e_{j},e_{\ell }).
\end{equation*}
\end{theorem}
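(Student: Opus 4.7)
The plan is to imitate the classical computation for $\bar\partial$ on a Kähler manifold as presented in \cite{moroianu}, but working with the transverse Levi--Civita connection $\N$ on $Q$ and carefully tracking all mean curvature corrections. First I would verify that the formal adjoint of $\tilde{\bar\partial}_B$ has the stated form. Since the inner product \eqref{scalarbasic} involves the characteristic form $\chi_\F$, integration by parts of $\nabla_{e_j}$ produces, via \eqref{Ru}, a boundary-like contribution controlled by $\kappa$, and the extra piece $-\tfrac{1}{4}(\kappa-iJ\kappa)\wedge$ built into $\tilde{\bar\partial}_B$ is precisely what is needed to absorb this correction and yield the clean expression
$$
(\tilde{\bar\partial}_B)^{*}=-\tfrac{1}{2}\sum_{j}(e_{j}+iJe_{j})\lrcorner\,\N_{e_{j}}+\tfrac{1}{4}(\kappa+iJ\kappa)\lrcorner\,.
$$
Here the hypothesis that $\kappa$ is basic is used to keep the correction purely transverse, and basic-harmonicity (hence $\delta_B\kappa=0$) will be used later to cancel trace terms.

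Next I would expand $(\tilde{\bar\partial}_B)^{*}\tilde{\bar\partial}_B+\tilde{\bar\partial}_B(\tilde{\bar\partial}_B)^{*}$ by direct substitution of the two explicit local expressions. This produces four natural groups of terms: second-order terms in $\N$, zeroth-order terms quadratic in $\kappa$, cross terms linear in $\N\kappa$, and curvature terms arising when the order of two covariant derivatives is swapped. The pure second-order contribution is where the Kähler condition $\N J=0$ is essential, since it lets $J$ commute with $\N_{e_j}$ and guarantees that the Clifford-type identities
$$
(e_j+iJe_j)\wedge(e_\ell-iJe_\ell)\lrcorner+(e_\ell-iJe_\ell)\lrcorner(e_j+iJe_j)\wedge=2\delta_{j\ell}
$$
hold pointwise. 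Summing over $j,\ell$ with these identities collapses the symbol part into $\N^{*}\N$; what remains is the antisymmetric combination, which by the definition of $R^\nabla$ and bookkeeping of the $iJ$ factors produces exactly the operator $\mathfrak{R}$ of the statement.

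The mean curvature corrections are then handled in parallel. The product of the two $\kappa$-twists gives the scalar $\tfrac{1}{4}|\kappa|^{2}$ after applying the same Clifford contractions. The cross terms between $\N_{e_j}$ and the $\kappa$-twist yield two families: one in which the derivative falls on $\kappa$, producing the wedge/contraction terms $\tfrac{1}{4}(e_j-iJe_j)\wedge(\N_{e_j}\kappa+iJ\N_{e_j}\kappa)\lrcorner$ and $-\tfrac{1}{4}(\N_{e_\ell}\kappa-iJ\N_{e_\ell}\kappa)\wedge(e_\ell+iJe_\ell)\lrcorner$; and one in which a Clifford contraction swallows one of the factors, leaving the scalar correction $\tfrac{i}{2}(Je_\ell,\N_{e_\ell}\kappa)$. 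At this stage the basic-harmonicity of $\kappa$ is invoked: the identity $\delta_B\kappa=0$ kills the trace $\sum_j\N_{e_j}\kappa(e_j)$ that would otherwise appear as a spurious term, while $d_B\kappa=0$ symmetrizes $\N\kappa$ so that the two first-order pieces above do not further combine into anything involving $d_B\kappa$.

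The main obstacle is not conceptual but organizational: the Clifford-type products $(e_j\pm iJe_j)\wedge$ against $(e_\ell\mp iJe_\ell)\lrcorner$ expand into four separate pieces with different bidegree behavior, and the $\kappa$-twists add still more terms, so one must be systematic in separating symmetric from antisymmetric contributions before identifying each with $\N^{*}\N$, with $\mathfrak{R}$, or with one of the explicit $\kappa$-terms in the statement. I expect that the cleanest way to carry this out is to isolate first the purely quadratic-in-$\N$ part (reproducing the classical Moroianu calculation verbatim but on $Q$), then the purely quadratic-in-$\kappa$ part, and finally the mixed $\N\kappa$ part, checking at each stage which combinations are cancelled by the assumption that $\kappa$ is $\Delta_B$-harmonic.
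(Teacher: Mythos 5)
Your overall strategy coincides with the paper's: both expand $(\tilde{\bar{\partial}}_{B})^{\ast}\tilde{\bar{\partial}}_{B}$ and $\tilde{\bar{\partial}}_{B}(\tilde{\bar{\partial}}_{B})^{\ast}$ in a local orthonormal frame parallel at a point, use the wedge/contraction commutation relations to extract $\nabla^{\ast}\nabla$ plus curvature, and sort the remaining terms by their order in $\kappa$. However, there is a genuine error at exactly the point where the foliated computation departs from the classical one in \cite{moroianu}, namely your claim that basic-harmonicity ``kills the trace $\sum_j\nabla_{e_j}\kappa(e_j)$''. Because the basic codifferential is the adjoint with respect to the inner product \eqref{scalarbasic}, which involves $\chi_\F$, integration by parts via \eqref{Ru} gives $\delta_B\alpha=-\sum_j e_j\lrcorner\nabla_{e_j}\alpha+\kappa\lrcorner\alpha$ on basic one-forms; hence $\delta_B\kappa=0$ does \emph{not} force the transverse divergence of $\kappa$ to vanish, but rather forces ${\rm div}_Q(\kappa)=|\kappa|^{2}$. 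This is precisely what the paper uses: the term $\frac12{\rm div}_Q(\kappa)=\frac12|\kappa|^{2}$ combines with the $-\frac14|\kappa|^{2}$ coming from the product of the two $\kappa$-twists to yield the $+\frac14|\kappa|^{2}$ of the statement. Your sketch instead attributes the $+\frac14|\kappa|^{2}$ directly to the product of the twists (which in fact contributes $-\frac14|\kappa|^{2}$ plus a wedge/contraction term that cancels between the two compositions) and discards the divergence; carried out literally this yields the wrong sign on the zeroth-order term.

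A smaller but related point: the commutation identity you quote, $(e_j+iJe_j)\wedge(e_\ell-iJe_\ell)\lrcorner+(e_\ell-iJe_\ell)\lrcorner(e_j+iJe_j)\wedge=2\delta_{j\ell}$, is missing the summand $2ig(Je_j,e_\ell)$ on the right-hand side. This omission is not harmless: contracted against $\nabla_{e_\ell}\nabla_{e_j}$, the antisymmetric piece $ig(Je_j,e_\ell)\nabla_{e_\ell}\nabla_{e_j}$ is exactly what produces the first summand $\frac{i}{2}R^\nabla(Je_j,e_j)$ of $\mathfrak{R}$, so it must be retained rather than absorbed into ``bookkeeping''. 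With these two corrections your plan reproduces the paper's proof.
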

\begin{proof}
Let $p$ be a fixed point of $M$ and  let $\{e_i\}_{i=1,\cdots,q}$ be an orthonormal frame of $\Gamma(Q)$ which we may assume to be parallel at $p$. Then working at $p$ we have

$$
\begin{aligned}
2(\tilde{\bar{\partial}}_{B})^{\ast}\tilde{\bar{\partial}}_{B} =\,&(\tilde{
\bar{\partial}}_{B})^{\ast}\left( \sum_{j=1}^q(e_{j}-iJe_{j})\wedge \nabla
_{e_{j}}-\frac{1}{2}(\kappa -iJ\kappa )\wedge \right)\\
=&-\frac{1}{2}\sum_{\ell=1}^q(e_{\ell }+iJe_{\ell })\lrcorner \nabla _{e_{\ell
}}\left( \sum_{j=1}^q(e_{j}-iJe_{j})\wedge \nabla _{e_{j}}-\frac{1}{2}(\kappa
-iJ\kappa )\wedge \right)\\
&+\frac{1}{4}(\kappa +iJ\kappa )\lrcorner \left(
\sum_{j=1}^q(e_{j}-iJe_{j})\wedge \nabla _{e_{j}}-\frac{1}{2}(\kappa -iJ\kappa
)\wedge \right)
\end{aligned}
$$
which yields to
$$
\begin{aligned}
2(\tilde{\bar{\partial}}_{B})^{\ast }\tilde{\bar{\partial}}_{B} =&-\frac{1}{2}\sum_{\ell=1}^q(e_{\ell }+iJe_{\ell })\lrcorner \left(
\sum_{j=1}^q(e_{j}-iJe_{j})\wedge \nabla _{e_{\ell }}\nabla _{e_{j}}-\frac{1}{2}
(\nabla _{e_{\ell }}\kappa -iJ(\nabla _{e_{\ell }}\kappa ))\wedge \,-\frac{1
}{2}(\kappa -iJ\kappa )\wedge \nabla _{e_{\ell }}\right)   \notag \\
&+\frac{1}{4}(\kappa +iJ\kappa )\lrcorner \left(
\sum_{j=1}^q(e_{j}-iJe_{j})\wedge \nabla _{e_{j}}-\frac{1}{2}(\kappa -iJ\kappa
)\wedge \right).
\end{aligned}
$$
Then we get
$$
\begin{aligned}
2(\tilde{\bar{\partial}}_{B})^{\ast}\tilde{\bar{\partial}}_{B} =&-\frac{1}{
2}(2\delta ^{\ell j}-2ig(Je_{j},e_{\ell }))\nabla _{e_{\ell }}\nabla _{e_{j}}+
\frac{1}{2}(e_{j}-iJe_{j})\wedge (e_{\ell }+iJe_{\ell
})\lrcorner \nabla _{e_{\ell }}\nabla _{e_{j}}  \notag \\
&+\frac{1}{2}g(\nabla _{e_{\ell }}\kappa ,e_{\ell })+\frac{i}{2}(Je_{\ell
},\nabla _{e_{\ell }}\kappa )-\frac{1}{4}(\nabla _{e_{\ell }}\kappa
-iJ(\nabla _{e_{\ell }}\kappa ))\wedge (e_{\ell }+iJe_{\ell })\lrcorner  \\
&+\frac{1}{2}g(\kappa ,e_{\ell })\nabla _{e_{\ell }}-\frac{i}{2}g(J\kappa
,e_{\ell })\nabla _{e_{\ell }}-\frac{1}{4}(\kappa -iJ\kappa )\wedge (e_{\ell
}+iJe_{\ell })\lrcorner \nabla _{e_{\ell }}  \notag \\
&+\frac{1}{2}g(\kappa ,e_{j})\nabla _{e_{j}}+\frac{i}{2}g(J\kappa
,e^{j})\nabla _{e_{j}}-\frac{1}{4}(e_{j}-iJe_{j})\wedge (\kappa +iJ\kappa
)\lrcorner \nabla _{e_{j}}  \notag \\
&-\frac{1}{4}|\kappa |^{2}+\frac{1}{8}(\kappa -iJ\kappa )\wedge (\kappa
+iJ\kappa )\lrcorner
\end{aligned}
$$
which gives that
$$
\begin{aligned}
2(\tilde{\bar{\partial}}_{B})^{\ast}\tilde{\bar{\partial}}_{B} =&\nabla ^{\ast }\nabla +ig(Je_{j},e_{\ell })\nabla _{e_{\ell }}\nabla
_{e_{j}}+\frac{1}{2}(e_{j}-iJe_{j})\wedge (e_{\ell }+iJe_{\ell
})\lrcorner \nabla _{e_{\ell }}\nabla _{e_{j}}  \notag \\
&+\frac{1}{2}\mathrm{div}_Q(\kappa )+\frac{i}{2}g(Je_{\ell },\nabla _{e_{\ell
}}\kappa )-\frac{1}{4}(\nabla _{e_{\ell }}\kappa -iJ(\nabla _{e_{\ell
}}\kappa ))\wedge (e_{\ell }+iJe_{\ell })\lrcorner   \notag \\
&-\frac{1}{4}(\kappa -iJ\kappa )\wedge (e_{\ell
}+iJe_{\ell })\lrcorner \nabla _{e_{\ell }}-\frac{1}{4}(e_{j}-iJe_{j})\wedge
(\kappa +iJ\kappa )\lrcorner \nabla _{e_{j}}  \notag \\
&-\frac{1}{4}|\kappa |^{2}+\frac{1}{8}(\kappa -iJ(\kappa ))\wedge (\kappa
+iJ(\kappa)),
\end{aligned}
$$
(here and in the following we omit the symbol of sum).
In the last equality above, we have made use of the relation
$$
\nabla^*\nabla=-\sum_{j=1}^q\nabla_{e_j}\nabla_{e_j}+\nabla_{\kappa}.
$$
Since we are assuming that the mean curvature form is basic-harmonic, the divergence of $\kappa$ is thus equal to the square of its norm. Thus, we have
$$
\begin{aligned}
2(\tilde{\bar{\partial}}_{B})^{\ast }\tilde{\bar{\partial}}_{B} =&\nabla ^{\ast }\nabla +\frac{1}{4}|\kappa |^{2}+\frac{i}{2}
g(Je_{j},e_{\ell })R^\nabla(e_{\ell },e_{j})+\frac{1}{2}(e_{j}-iJe_{j})\wedge \left( (e_{\ell }+iJe_{\ell })\lrcorner \nabla
_{e_{\ell }}\nabla _{e_{j}}\right) \\
&+\frac{i}{2}g(Je_{\ell },\nabla _{e_{\ell }}\kappa )-\frac{1}{4}(\nabla
_{e_{\ell }}\kappa -iJ(\nabla _{e_{\ell }}\kappa ))\wedge (e_{\ell
}+iJe_{\ell })\lrcorner \\
&-\frac{1}{4}(\kappa -iJ\kappa )\wedge (e_{\ell }+iJe_{\ell })\lrcorner
\nabla _{e_{\ell }}-\frac{1}{4}(e_{j}-iJe_{j})\wedge (\kappa +iJ\kappa
)\lrcorner \nabla _{e_{j}}+\frac{1}{8}(\kappa -iJ\kappa )\wedge (\kappa
+iJ\kappa )\lrcorner \\
=&\nabla ^{\ast }\nabla +\frac{1}{4}|\kappa |^{2}+\frac{i}{2}
R^\nabla(Je_{j},e_{j})+\frac{1}{2}(e_{j}-iJe_{j})\wedge (e_{\ell }+iJe_{\ell
})\lrcorner \nabla _{e_{j}}\nabla _{e_{\ell }} \\
&+\frac{1}{2}(e_{j}-iJe_{j})\wedge (e_{\ell }+iJe_{\ell })\lrcorner
R^\nabla(e_{\ell },e_{j})+\frac{i}{2}g(Je_{\ell },\nabla _{e_{\ell }}\kappa )-\frac{1
}{4}(\nabla _{e_{\ell }}\kappa -iJ(\nabla _{e_{\ell }}\kappa ))\wedge
(e_{\ell }+iJe_{\ell })\lrcorner \\
&-\frac{1}{4}(\kappa -iJ\kappa )\wedge (e_{\ell }+iJe_{\ell })\lrcorner
\nabla _{e_{\ell }}-\frac{1}{4}(e_{j}-iJe_{j})\wedge (\kappa +iJ\kappa
)\lrcorner \nabla _{e_{j}}+\frac{1}{8}(\kappa -iJ\kappa )\wedge (\kappa
+iJ\kappa )\lrcorner,
\end{aligned}
$$
which finally gives
$$
\begin{aligned}
2(\tilde{\bar{\partial}}_{B})^{\ast }\tilde{\bar{\partial}}_{B}
=&\nabla ^{\ast }\nabla +\frac{1}{4}|\kappa |^{2}+\mathfrak{R}+\frac{1}{2}%
(e_{j}-iJe_{j})\wedge (e_{\ell }+iJe_{\ell })\lrcorner \nabla _{e_{j}}\nabla
_{e_{\ell }}+\frac{i}{2}(Je_{\ell },\nabla _{e_{\ell }}\kappa ) \\
&-\frac{1}{4}(\nabla _{e_{\ell }}\kappa -iJ(\nabla _{e_{\ell }}\kappa
)\wedge (e_{\ell }+iJe_{\ell })\lrcorner -\frac{1}{4}(\kappa -iJ\kappa
)\wedge (e_{\ell }+iJe_{\ell })\lrcorner \nabla _{e_{\ell }}\\
&-\frac{1}{4}
(e_{j}-iJe_{j})\wedge (\kappa +iJ\kappa )\lrcorner \nabla _{e_{j}} 
+\frac{1}{8}(\kappa -iJ\kappa )\wedge (\kappa +iJ\kappa )\lrcorner\,.
\end{aligned}
$$
On the other hand,
$$
\begin{aligned}
2\tilde{\bar{\partial}}_{B}(\tilde{\bar{\partial}}_{B})^{\ast } =&\tilde{
\bar{\partial}}_{B}(-(e_{\ell }+iJe_{\ell })\lrcorner \nabla _{e_{\ell }}+
\frac{1}{2}(\kappa +iJ\kappa )\lrcorner ) \\
=&\frac{1}{2}\{(e_{j}-iJe_{j})\wedge \nabla _{e_{j}}(-(e_{\ell }+iJe_{\ell
})\lrcorner +\frac{1}{2}(\kappa +iJ\kappa )\lrcorner )\} \\
&-\frac{1}{4}(\kappa -iJ\kappa )\wedge (-(e_{\ell }+iJe_{\ell })\lrcorner
\nabla _{e_{\ell }}+\frac{1}{2}(\kappa +iJ\kappa )\lrcorner ) \\
=&-\frac{1}{2}(e_{j}-iJe_{j})\wedge (e_{\ell }+iJe_{\ell })\lrcorner \nabla
_{e_{j}}\nabla _{e_{\ell }}+\frac{1}{4}(e_{j}-iJe_{j})\wedge (\nabla
_{e_{j}}\kappa +iJ(\nabla _{e_{j}}\kappa ))\lrcorner \\
&+\frac{1}{4}(e_{j}-iJe_{j})\wedge (\kappa +iJ\kappa )\lrcorner \nabla
_{e_{j}}+\frac{1}{4}(\kappa -iJ\kappa )\wedge (e_{\ell }+iJe_{\ell
})\lrcorner \nabla _{e_{\ell }} \\
&-\frac{1}{8}(\kappa -iJ\kappa )\wedge (\kappa +iJ\kappa )\lrcorner.
\end{aligned}
$$
Thus, by taking the sum of the last two equations we find
$$
\begin{aligned}
2((\tilde{\bar{\partial}}_{B})^{\ast }\tilde{\bar{\partial}}_{B}+\tilde{\bar{
\partial}}_{B}(\tilde{\bar{\partial}}_{B})^{\ast }) =
\,&\nabla ^{\ast }\nabla
+\frac{1}{4}|\kappa |^{2}+\mathfrak{R}+\frac{1}{4}(e_{j}-iJe_{j})\wedge
(\nabla _{e_{j}}\kappa +iJ(\nabla _{e_{j}}\kappa ))\lrcorner \\
&+\frac{i}{2}(Je_{\ell },\nabla _{e_{\ell }}\kappa )-\frac{1}{4}(\nabla
_{e_{\ell }}\kappa -iJ(\nabla _{e_{\ell }}\kappa ))\wedge (e_{\ell
}+iJe_{\ell }\lrcorner)\,,
\end{aligned}
$$
as required.
\end{proof}
The previous theorem has the following remarkable consequence when it is applied to  $(p,0)$-forms:
\begin{cor} In the hypothesis of theorem $\ref{W}$, for every form $\alpha\in \Lambda_B^{p,0}(M)$ we have
\begin{equation*}
2(\tilde{\bar{\partial}}_{B})^{\ast }\tilde{\bar{\partial}}_{B}\alpha=\nabla
^{\ast }\nabla \alpha+\frac{1}{4}|\kappa |^{2}\alpha+\frac{i}{2}\sum_{j=1}^qR^\nabla(Je_{j},e_{j})\alpha-
\frac{i}{2}{\rm div}_Q(J\kappa)\alpha\,.
\end{equation*}

\end{cor}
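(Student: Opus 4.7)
The plan is to specialize the Weitzenb\"ock formula of Theorem \ref{W} to a form $\alpha\in \Lambda_B^{p,0}(M)$ and exploit the type decomposition induced by $J$. The key observation is that for every $v\in Q$ the vector $v+iJv$ lies in the $(-i)$-eigenspace $Q^{0,1}$ of $J$, while $v-iJv$ lies in $Q^{1,0}$. Since $\alpha$ is a $(p,0)$-form, contraction with any $(0,1)$-vector annihilates it, so
$$
(e_\ell+iJe_\ell)\lrcorner\alpha=0,\qquad (\kappa+iJ\kappa)\lrcorner\alpha=0,\qquad (\nabla_{e_j}\kappa+iJ(\nabla_{e_j}\kappa))\lrcorner\alpha=0.
$$

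The first step is to reduce the left hand side. Using the K\"ahler assumption $\nabla J=0$, the connection $\nabla$ preserves the $(p,q)$-type, so $\nabla_{e_j}\alpha$ is again a $(p,0)$-form. From the explicit formula
$$
(\tilde{\bar{\partial}}_{B})^{\ast }\alpha=-\tfrac{1}{2}(e_{j}+iJe_{j})\lrcorner \nabla _{e_{j}}\alpha+\tfrac{1}{4}(\kappa +iJ\kappa )\lrcorner\alpha
$$
both terms vanish, hence $(\tilde{\bar{\partial}}_{B})^{\ast }\alpha=0$ and consequently $\tilde{\bar\partial}_B(\tilde{\bar\partial}_B)^*\alpha=0$. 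The left hand side of Theorem \ref{W} therefore reduces to $2(\tilde{\bar{\partial}}_{B})^{\ast }\tilde{\bar{\partial}}_{B}\alpha$.

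Next I would eliminate the corresponding terms on the right hand side. The same observations kill the second summand of $\mathfrak{R}$ (because $R^{\nabla}(e_j,e_\ell)\alpha$ remains a $(p,0)$-form, $\nabla$ being of type $(1,0)+(0,1)$ and $J$-preserving), the term $\tfrac{1}{4}(e_{j}-iJe_{j})\wedge(\nabla _{e_{j}}\kappa +iJ(\nabla _{e_{j}}\kappa ))\lrcorner\alpha$, and the final term $-\tfrac{1}{4}(\nabla_{e_{\ell }}\kappa -iJ(\nabla _{e_{\ell }}\kappa ))\wedge (e_{\ell}+iJe_{\ell })\lrcorner\alpha$. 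What survives on the right is
$$
\nabla^{\ast }\nabla \alpha+\tfrac{1}{4}|\kappa|^{2}\alpha+\tfrac{i}{2}\sum_{j}R^\nabla(Je_{j},e_{j})\alpha+\tfrac{i}{2}\sum_{\ell}g(Je_{\ell },\nabla _{e_{\ell }}\kappa)\alpha.
$$

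The final step is to rewrite the last scalar coefficient. Using $\nabla J=0$,
$$
{\rm div}_Q(J\kappa)=\sum_{\ell}g(\nabla_{e_\ell}(J\kappa),e_\ell)=\sum_{\ell}g(J\nabla_{e_\ell}\kappa,e_\ell)=-\sum_{\ell}g(\nabla_{e_\ell}\kappa,Je_\ell),
$$
so that $\tfrac{i}{2}\sum_\ell g(Je_\ell,\nabla_{e_\ell}\kappa)=-\tfrac{i}{2}{\rm div}_Q(J\kappa)$, yielding the stated identity. There is no serious obstacle here: the argument is a routine specialization of Theorem \ref{W}, the only point requiring care being the systematic bookkeeping of types, which is governed entirely by the two identities $v+iJv\in Q^{0,1}$ and $\nabla J=0$.
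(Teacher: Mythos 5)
Your argument is correct and is exactly the specialization the paper intends: the paper offers no separate proof, presenting the corollary as the direct consequence of Theorem \ref{W} applied to $(p,0)$-forms, and your type bookkeeping via $v+iJv\in Q^{0,1}$, $\nabla J=0$, and the sign identity $\sum_\ell g(Je_\ell,\nabla_{e_\ell}\kappa)=-{\rm div}_Q(J\kappa)$ fills in precisely the steps being left to the reader. No gaps.
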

%In the formala above compare the tensor ${\rm Ric}^{\nabla}=\frac{1}{2}\sum_{j=1}^qR^\nabla(Je_{j},e_{j})$ which is usually called the {\em Ricci curvature} of the foliation and play an important role in the study of K\"ahler foliations. 
Another consequence of theorem \ref{W} is the following 
%As shown in \cite{domin, mason}, on a compact manifold every bundle-like metric can be changed into another one having the same transversal component and the mean curvature basic-harmonic. This in particular implies that the assumptions on the mean curvature in theorem \ref{W} can be ignored in the following statement:
%\textcolor{magenta}{The following result involves the Ricci curvature of a Riemannian foliation. Here we recall that if $(M,g,\F)$ is a foliated manifold with a bundle-like metric, then the Ricci curvature is the tensor 
%$$
%{\rm Ric^{\nabla}(X):=}
%$$ 
\begin{theorem}\label{vanishing}
Let $(M,g,\mathcal{F},J)$ be a compact manifold endowed with a K\"ahler
foliation. If the transverse Ricci curvature is negative definite,
then $\mathcal{F}$ has only trivial transversally holomorphic vector fields.
\end{theorem}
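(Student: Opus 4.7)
The plan is to obtain vanishing by applying the corollary of Theorem \ref{W} to the basic $(1,0)$-form metrically dual to a candidate transversally holomorphic vector field, then integrating against itself and exploiting the negativity of ${\rm Ric}^\N$. As a preparation, by \cite{domin} I fix a bundle-like metric whose mean curvature $\kappa$ is basic; since $M$ is compact we may further normalize $\kappa$ to be basic-harmonic, which is precisely the hypothesis of Theorem \ref{W} and its corollary.

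First I would use $g_Q$ and $J$ to identify a transversally holomorphic vector field $X$ with a basic $(1,0)$-form $\alpha\in\Lambda_B^{1,0}(M)$. The transversal holomorphicity of $X$ translates, after absorbing the $\tfrac{1}{2}\kappa$-twist present in $\tilde{d}_B$, into $\tilde{\bar{\partial}}_B\alpha=0$, so that the left-hand side of the corollary vanishes on $\alpha$.

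Next I would take the basic $L^2$-inner product of the corollary with $\alpha$ and integrate over $M$ against $\chi_\F$. The resulting identity schematically reads
\begin{equation*}
0=\|\N\alpha\|_B^2+\tfrac{1}{4}\int_M|\kappa|^2|\alpha|^2\,\chi_\F+\tfrac{i}{2}\sum_j\int_M\langle R^\N(Je_j,e_j)\alpha,\alpha\rangle\,\chi_\F+(\text{correction terms in }\N\kappa,\,{\rm div}_Q(J\kappa)).
\end{equation*}
The crucial step is to identify the curvature pairing, for $\alpha$ of type $(1,0)$, with $-{\rm Ric}^\N(\alpha,\bar\alpha)$ exactly as in the classical K\"ahler setting, using the K\"ahler symmetries of $R^\N$ and the $J$-invariance of ${\rm Ric}^\N$. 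Under the hypothesis ${\rm Ric}^\N<0$ this Ricci term is then strictly positive whenever $\alpha\neq 0$, while $\|\N\alpha\|_B^2$ and the $|\kappa|^2$-term are non-negative; the remaining $\N\kappa$- and ${\rm div}_Q(J\kappa)$-terms should either vanish or reduce to non-negative contributions after integration by parts combined with the basic-harmonic conditions $\delta_B\kappa=0$, $d_B\kappa=0$. The sum of non-negative quantities being zero forces $\alpha\equiv 0$, hence $X=0$.

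The main obstacle is the curvature bookkeeping: matching $\tfrac{i}{2}\sum_jR^\N(Je_j,e_j)$, acting on $\Lambda_B^{1,0}(M)$, with the negative of the transverse Ricci endomorphism requires a careful index and sign computation relying on the full K\"ahler symmetries of $R^\N$. A secondary but non-trivial point is to show that the mixed $\kappa$-dependent correction terms do not spoil the sign of the final inequality; this is where the basic-harmonic choice of $\kappa$ is decisive, since it is what allows the needed integrations by parts to absorb the derivative-of-$\kappa$ contributions.
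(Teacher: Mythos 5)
Your overall strategy coincides with the paper's: dualize the transversally holomorphic vector field to a basic $(1,0)$-form $\alpha$, apply the $(p,0)$-corollary of theorem \ref{W}, pair with $\alpha$, integrate, and use the sign of ${\rm Ric}^\nabla$. However, one step as you state it is false. Transversal holomorphicity is a metric-independent notion and gives $\bar{\partial}_B\alpha=0$ for the dual form; there is no ``absorbing the $\tfrac12\kappa$-twist'' into the definition, so $\tilde{\bar{\partial}}_B\alpha$ does \emph{not} vanish. What one actually gets is
\[
\tilde{\bar{\partial}}_B\alpha=-\tfrac14(\kappa-iJ\kappa)\wedge\alpha,
\qquad
2\,|\tilde{\bar{\partial}}_B\alpha|^2=\tfrac14|\kappa|^2|\alpha|^2,
\]
so the left-hand side of the corollary paired with $\alpha$ is not $0$ but $\tfrac14\int_M|\kappa|^2|\alpha|^2$. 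This is precisely the computation in the paper, and it turns out to be harmless: that quantity cancels against the $\tfrac14|\kappa|^2|\alpha|^2$ term on the right, leaving $0=\int_M|\nabla\alpha|^2-\int_M H({\rm Ric}^\nabla(\alpha),\alpha)$, and ${\rm Ric}^\nabla<0$ then forces $\alpha\equiv0$. So your conclusion survives, but only because the term you incorrectly set to zero on the left reappears with the same sign on the right; as written, the assertion $\tilde{\bar{\partial}}_B\alpha=0$ is unjustified and must be replaced by the identity above.

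The rest of your plan matches the paper. The curvature identification is carried out exactly as you anticipate, via $\tfrac12\sum_jR^\nabla(Je_j,e_j)\alpha={\rm Ric}^\nabla(J\alpha)=i\,{\rm Ric}^\nabla(\alpha)$ for $\alpha$ of type $(1,0)$, so that $\tfrac{i}{2}\sum_jR^\nabla(Je_j,e_j)$ acts as $-{\rm Ric}^\nabla$. Your worry about $\nabla\kappa$-correction terms is moot on $(p,0)$-forms: the corollary already shows that the only surviving $\kappa$-term beyond $\tfrac14|\kappa|^2$ is $-\tfrac{i}{2}{\rm div}_Q(J\kappa)$, which the paper silently discards; justifying that it does not contribute (using $d_B\kappa=0$, $\delta_B\kappa=0$ and the transverse K\"ahler identities) is the one loose end common to your sketch and to the published argument.
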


\begin{proof}
We show that every  $\xi \in \Lambda_{B}^{1,0}(M)$ satisfying $\bar{\partial}_{B}\xi=0$ is trivial. Such a $\xi$ satisfies $\tilde{\bar{\partial}}_{B}\xi =-\frac{1}{4}(\kappa -iJ\kappa
)\wedge \xi $ and hence $|\tilde{\bar{\partial}}_{B}\xi |_{H}^{2}=
\frac{1}{8}|\kappa |^{2}|\xi |^{2}$. By taking the product with $\xi $
in the Weitzenb\"{o}ck formula and integrating over $M$ we get
\begin{equation*}
\frac{1}{4}\int_{M}|\kappa |^{2}|\xi |_{H}^{2}v_g=\int_{M}|\nabla \xi |^{2}v_g+
\frac{1}{4}\int_{M}|\kappa |^{2}|\xi |_{H}^{2}v_g-\int_{M}H({\rm Ric}^\nabla(\xi ),\xi
)v_{g}.
\end{equation*}
In the above identity, we have used $\frac{1}{2}\sum_{j=1}^qR^\nabla(Je_{j},e_{j})\xi ={\rm Ric}^\nabla(J\xi
)=i{\rm Ric}^\nabla(\xi)$. The assumption on the Ricci curvature to be negative definite implies the statement.
\end{proof}

We point out that theorem \ref{vanishing} was obtained before by S.D. Jung in \cite{jung} by using another method.

%\textcolor{blue}{I THINK THAT HERE THE INTERSECTION BETWEEN THE TWO RESULTS SHOULD BE EXPLAINED BETTER}
\begin{theorem}\label{SUn}
Let $(M,g,\F,J)$ be a compact manifold endowed with a K\"ahler foliation.
If the transverse Ricci curvature vanishes, then every transversally
holomorphic form is parallel. Moreover, if the transverse curvature of $M$ is
positive definite, there every transversally holomorphic $(p,0)$-form on $M$ is trivial.
\end{theorem}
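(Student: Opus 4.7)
My plan is to apply the Weitzenb\"ock identity of the corollary to a transversally holomorphic $(p,0)$-form $\alpha$, following the same scheme as in the proof of Theorem \ref{vanishing}.

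Let $\alpha\in\Lambda_B^{p,0}(M)$ satisfy $\bar\partial_B\alpha=0$. Then $\tilde{\bar\partial}_B\alpha=-\tfrac{1}{4}(\kappa-iJ\kappa)\wedge\alpha$, and since $\alpha$ has no component of type $(0,1)$, the same pointwise computation as in Theorem \ref{vanishing} yields $|\tilde{\bar\partial}_B\alpha|_H^2=\tfrac{1}{8}|\kappa|^2|\alpha|^2$. Pairing the identity
\[
2(\tilde{\bar\partial}_B)^*\tilde{\bar\partial}_B\alpha=\nabla^*\nabla\alpha+\tfrac{1}{4}|\kappa|^2\alpha+\tfrac{i}{2}\sum_{j=1}^qR^\nabla(Je_j,e_j)\alpha-\tfrac{i}{2}\mathrm{div}_Q(J\kappa)\alpha
\]
with $\alpha$, integrating over $M$ and cancelling the $\tfrac{1}{4}|\kappa|^2|\alpha|^2$ contributions on both sides, one obtains
\[
\int_M|\nabla\alpha|^2\,v_g+\tfrac{i}{2}\int_M H\!\Big(\sum_{j=1}^qR^\nabla(Je_j,e_j)\alpha,\,\alpha\Big)\,v_g=0,
\]
the term $\tfrac{i}{2}\mathrm{div}_Q(J\kappa)|\alpha|^2$ integrating to zero under the basic-harmonicity of $\kappa$, exactly as in the proof of Theorem \ref{vanishing}.

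The next step is to identify the curvature endomorphism on $\Lambda_B^{p,0}(M)$ as a sum of $p$ transverse Ricci contributions, one per holomorphic index of $\alpha$. This follows by extending the identity $\tfrac{1}{2}\sum_jR^\nabla(Je_j,e_j)\xi=\mathrm{Ric}^\nabla(J\xi)=i\mathrm{Ric}^\nabla(\xi)$ (used in the proof of Theorem \ref{vanishing} for $\xi\in\Lambda_B^{1,0}(M)$) to $(p,0)$-forms via the derivation property of the curvature on exterior powers. If $\mathrm{Ric}^\nabla\equiv 0$, this curvature integrand is identically zero and the identity reduces to $\int_M|\nabla\alpha|^2\,v_g=0$: thus $\nabla\alpha=0$ and $\alpha$ is parallel. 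If the transverse curvature is positive definite, the associated Ricci action on $\Lambda_B^{p,0}(M)$ is a definite Hermitian form, and combined with $\int_M|\nabla\alpha|^2\geq 0$ the vanishing of the left-hand side forces both $\nabla\alpha=0$ and the pointwise vanishing of the Ricci contribution, which by definiteness of the form gives $\alpha\equiv 0$.

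The main technical obstacle I anticipate is purely bookkeeping: verifying the vanishing of the $\mathrm{div}_Q(J\kappa)$ term under the basic-harmonicity of $\kappa$ (a transverse K\"ahler identity) and tracking the signs in the Ricci action on $\Lambda_B^{p,0}(M)$ so that the positivity argument carries the correct sign for the second assertion. Both are direct analogues of standard K\"ahler computations from \cite{moroianu}.
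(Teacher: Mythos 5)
Your proposal follows the same route as the paper: apply the Weitzenb\"ock formula (in its $(p,0)$ corollary form) to a transversally holomorphic form, use $|\tilde{\bar\partial}_B\alpha|_H^2=\tfrac18|\kappa|^2|\alpha|^2$ to cancel the mean-curvature term after integration, and identify the curvature endomorphism with a sum of transverse Ricci contributions via the derivation property $R^\nabla(X,Y)\gamma=R^\nabla(X,Y)e_k\wedge e_k\lrcorner\gamma$ and the identity $\tfrac12\sum_j R^\nabla(Je_j,e_j)e_k={\rm Ric}^\nabla(Je_k)$. The paper likewise defers the positive-curvature case to the standard non-foliated argument of Moroianu, so your treatment is essentially identical.
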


\begin{proof} Let $\gamma$ be a transversally holomorphic $(p,0)$-form.
Then $\bar\partial_B\gamma=0$ and thus $\tilde{\bar \partial}_B\gamma=-\frac{1
}{4}(\kappa-iJ\kappa)\wedge\gamma$ which implies $|\tilde{\bar \partial}
_B\gamma|_H^2=\frac{1}{8}|\kappa|^2|\gamma|^2$. Using the Weitzenb\"ock formula and the identities
$$
R^\nabla(X,Y)\gamma=R^\nabla(X,Y)e_k\wedge
e_k\lrcorner\gamma
$$
and
$$
\frac{1}{2}\sum_{j=1}^q R^\nabla(J(e_i),e_i)e_k={\rm Ric}^\nabla(J(e_k)),
$$
we get the first part of the statement. The second part of the theorem can be obtained following the same proof as in the non-foliated case (see e.g.  \cite{moroianu}).
\end{proof}

\section{Transverse Calabi-Yau structures}\label{sectionSU(n)}
In this short section, we consider {\em transverse Calabi-Yau} structures. Such structures can be defined as Riemannian foliations having transverse holonomy contained in ${\rm SU}(n)$ and have been taken into account in \cite{moro1,moro2} where it is proved that in the {\em taut} case the moduli space is a smooth Hausdorff manifold of finite dimension (i.e. a generalization of the Bogomolov-Tian-Todorov theorem \cite{Bogo, tian, todorov} to the foliated case). Moreover, Calabi-Yau foliations can be used for desingularizing Calabi-Yau orbifolds. 

\medskip
The following proposition is a direct consequence of theorem \ref{SUn}
\begin{prop}\label{transvholomorphic}
On a compact manifold endowed with a Calabi-Yau foliation every transversally holomorphic form is parallel with respect to the transverse Levi-Civita connection.
\end{prop}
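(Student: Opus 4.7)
The plan is to deduce this as a corollary of Theorem \ref{SUn} by verifying that the transverse Ricci curvature of a Calabi-Yau foliation vanishes identically. The argument should mirror the non-foliated fact that a K\"ahler manifold with holonomy in ${\rm SU}(n)$ is Ricci-flat, but transported to the foliated setting through the transverse Levi-Civita connection $\N$.

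First, I would unpack the definition: a Calabi-Yau foliation is by assumption a Riemannian foliation with ${\rm Hol}(\N)\subseteq {\rm SU}(n)\subseteq {\rm U}(n)$, so in particular $(\F,g_Q)$ is a K\"ahler foliation of transverse complex dimension $n$. Thus the setting of Theorem \ref{SUn} is already in place. Additionally, by \cite{domin} I may choose a bundle-like metric $g$ whose mean curvature form $\kappa$ is basic, which is the background hypothesis used in the preceding Weitzenb\"ock formula.

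Next comes the key observation: since the transverse holonomy sits in ${\rm SU}(n)$, the Ambrose-Singer theorem applied to $\N$ implies that $R^{\N}$ takes values in the Lie algebra $\mathfrak{su}(n)\subset \mathfrak{u}(n)$ at each point. Under the usual identification $\mathfrak{u}(n)\simeq \Lambda^{1,1}(Q)$, the transverse Ricci form $\rho_B$ is (up to a universal factor) the trace of the curvature endomorphism $R^{\N}$ regarded as a $\mathfrak{u}(n)$-valued $2$-form. Since elements of $\mathfrak{su}(n)$ are traceless, we conclude $\rho_B=0$ pointwise. Because $(g_Q,J)$ is transverse K\"ahler and $\rho_B={\rm Ric}^{\N}(J\cdot,\cdot)$ (up to a constant), this is equivalent to ${\rm Ric}^{\N}\equiv 0$.

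With this in hand the proof is immediate: every transversally holomorphic form is parallel with respect to $\N$ by the first assertion of Theorem \ref{SUn} applied to $(M,g,\F,J)$. The only point that requires a little care, rather than being a true obstacle, is the identification between the ${\rm SU}(n)$-condition on holonomy and the vanishing of ${\rm Ric}^{\N}$; once the trace argument above is spelled out, the statement reduces verbatim to Theorem \ref{SUn}.
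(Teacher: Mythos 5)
Your proposal is correct and follows exactly the route the paper intends: the paper gives no written proof, stating only that the proposition is ``a direct consequence of theorem \ref{SUn}'', and the missing link you supply --- that ${\rm Hol}(\N)\subseteq {\rm SU}(n)$ forces $R^{\N}$ to be $\mathfrak{su}(n)$-valued, hence traceless, hence ${\rm Ric}^{\N}=0$ --- is the standard argument and is the converse of the computation the paper itself carries out in proposition \ref{SU(n)} via the curvature of $K=\Lambda^{n,0}(Q)$.
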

Moreover, we have the following:
\begin{prop}\label{SU(n)}
Let $(M,\F,g_Q,J)$ be a  compact simply-connected manifold carrying a  K\"ahler foliation. Assume $\rho_Q=0$; then ${\rm Hol}(\N)$ is contained in ${\rm SU}(n)$ and $\F$ is a Calabi-Yau foliation.
\end{prop}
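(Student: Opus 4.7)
The plan is to produce a nowhere-vanishing $\N$-parallel section $\eta$ of $\Lambda^{n,0}(Q)$; once this is in hand the proposition follows at once. Indeed, on a K\"ahler foliation $\N$ preserves both $g_Q$ and $J$, so ${\rm Hol}(\N)$ is automatically contained in ${\rm U}(n)$, and the existence of a parallel element in the determinant representation $\Lambda^{n,0}(Q)$ forces ${\rm Hol}(\N)$ into the kernel of $\det\colon {\rm U}(n)\to {\rm U}(1)$, namely ${\rm SU}(n)$. This is precisely the condition for $\F$ to be a Calabi-Yau foliation.

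To construct $\eta$ I would regard $\Lambda^{n,0}(Q)\to M$ as a Hermitian line bundle over the whole of $M$, equipped with the connection induced by $\N$. The standard K\"ahler-geometric computation, which carries over verbatim to the foliated setting (see e.g.\ \cite{moroianu}), shows that the curvature of this induced connection is proportional to the transverse Ricci form $\rho_Q$. Because $X\lrcorner R^{\N}=0$ for every $X\in \Gamma(L)$, this curvature is automatically a basic $2$-form on $M$, with no hidden leaf-direction contributions. Hence the hypothesis $\rho_Q=0$ says that $\Lambda^{n,0}(Q)$ is a flat Hermitian line bundle on the whole of $M$.

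Since $M$ is simply-connected, the holonomy representation $\pi_1(M)\to {\rm U}(1)$ of this flat line bundle is trivial. Therefore, fixing a base point $p$ and any unit vector $\eta_p\in \Lambda^{n,0}(Q_p)$, parallel transport along arbitrary paths yields a globally defined, nowhere-vanishing, $\N$-parallel section $\eta$ of $\Lambda^{n,0}(Q)$, which completes the argument. The only point that requires verification is the curvature identification in the previous paragraph; this is a pointwise check using only that $\N$ is $J$- and $g_Q$-preserving together with $X\lrcorner R^{\N}=0$ on $\Gamma(L)$, and I anticipate no genuine obstacle. One could alternatively invoke theorem \ref{SUn} to conclude that any transversally holomorphic $(n,0)$-form is $\N$-parallel, reducing the task to producing a nonzero transversally holomorphic trivialization of $\Lambda^{n,0}(Q)$, which again follows from flatness plus simple-connectedness.
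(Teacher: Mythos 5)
Your argument is correct and is essentially the paper's own proof: both identify the curvature of the connection induced by $\N$ on $K=\Lambda^{n,0}(Q)$ with the transverse Ricci form, deduce flatness of $K$ from $\rho_Q=0$, and use simple-connectedness to upgrade the conclusion from the restricted holonomy ${\rm Hol}^0(\N)\subseteq{\rm SU}(n)$ to the full group. Your phrasing via a globally parallel trivializing section obtained by parallel transport is just an equivalent packaging of the same step.
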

\begin{proof}
Let $(\F,g_Q,J)$ be a K\"ahler foliation, 
$K:=\Lambda^{n,0}(Q)$ and let $R^K$ be the curvature of the connection induced by the transverse Levi-Civita connection on $K$.  Fix a global section $\psi$ of $K$. Then a standard  computation yields
$$
R^{K}_{X,Y}\psi=\rho_Q(X,Y)\psi
$$
for every pair of smooth vector fields on  $M$. Hence if we assume $\rho_Q=0$, then we have  $R^K=0$ which is equivalent to require that  ${\rm Hol}^0(\N)\subseteq {\rm SU}(n)$.  Hence when $M$ is simply-connected, we have ${\rm Hol}(\N)\subseteq {\rm SU}(n)$, as required. 
\end{proof}
Combining the El Kacimi theorem \ref{transvcalabiyau} with the last proposition, we get the following 
\begin{cor}\label{4.4}
Let  $(\F,g_Q,J)$   K\"ahler foliation  on a compact simply-connected manifold $M$ and assume $c_B^1=0$. Then there exists
a unique transverse K\"ahler form $\omega'$
in the same basic cohomology class of the fundamental form of $g_Q$ whose transverse Levi-Civita connection $\nabla'$ satisfies ${\rm Hol}(\N')\subseteq {\rm SU}(n)$.
\end{cor}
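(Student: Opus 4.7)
\medskip

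\noindent\textbf{Proof proposal.} The plan is to obtain the statement as a direct concatenation of theorem \ref{transvcalabiyau} and proposition \ref{SU(n)}, without any genuinely new computation. The uniqueness is then free, since it is already built into theorem \ref{transvcalabiyau}.

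First I will check that El Kacimi's theorem is available. Since $M$ is compact and simply-connected, the discussion recalled in section 2 (based on \cite{domin,Kamber,T}) tells us that every Riemannian foliation on $M$ is taut: one can choose a bundle-like metric whose mean curvature form $\kappa$ is basic, and $\kappa$ is then closed; simple-connectedness forces its class to vanish, whence tautness. Thus $(\F,g_Q,J)$ is a taut K\"ahler foliation and theorem \ref{transvcalabiyau} applies. The hypothesis $c_B^1=0$ means that the zero form is a real basic $(1,1)$-representative of the first basic Chern class, so El Kacimi's theorem produces a unique transverse K\"ahler form $\omega'$ in the basic cohomology class of $\omega$ whose transverse Ricci form is identically zero. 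Let $g'_Q$ be the corresponding transverse metric and $\N'$ its transverse Levi-Civita connection; notice that the transverse complex structure $J$ is unchanged, because theorem \ref{transvcalabiyau} deforms only the K\"ahler form inside a fixed basic cohomology class.

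Second, I will apply proposition \ref{SU(n)} to the deformed K\"ahler foliation $(\F,g'_Q,J)$. Since its transverse Ricci form vanishes by construction and $M$ is simply-connected, proposition \ref{SU(n)} yields ${\rm Hol}(\N')\subseteq {\rm SU}(n)$, which is precisely the Calabi--Yau holonomy condition claimed. The uniqueness of $\omega'$ in $[\omega]_B$ with this property is already the uniqueness statement of theorem \ref{transvcalabiyau}, so nothing further is needed.

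There is no real obstacle in this argument; the only point one should verify carefully is that the reduction of the basic first Chern class to the curvature of the canonical line bundle $K=\Lambda^{n,0}(Q)$ carried out in proposition \ref{SU(n)} applies verbatim to $(\F,g'_Q,J)$, but this is immediate because $J$ has been preserved throughout the deformation. Thus the corollary reduces to two invocations of results already proved in the paper.
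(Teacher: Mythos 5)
Your proposal is correct and follows exactly the route the paper intends: the paper states the corollary as an immediate combination of El Kacimi's theorem \ref{transvcalabiyau} (applicable since simple-connectedness gives tautness) with proposition \ref{SU(n)}, with uniqueness inherited from the former. Your write-up merely makes explicit the details the paper leaves implicit, so there is nothing to add.
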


\section{Transverse Hyper-K\"ahler structures}
A particular class of  Calabi-Yau foliations is provided by hyper-K\"ahler foliations. These latters are characterized by a triple $(J_1,J_2,J_3)$ of transverse complex structures on a  foliated manifold  $(M,\F,g_Q)$ such that $(\F,g_Q,J_r)$ is a K\"ahler foliation for every $r=1,2,3$ and it is satisfied the quaternionic relation
\b\label{quaternion}
J_1J_2=-J_2J_1=J_3.
\e
Requiring the existence of this structure on a foliated manifold $(M,\F)$ is equivalent to require  the existence of a transverse metric having transverse holonomy group contained in ${\rm Sp}(n).$ Moreover, if $(\F,g_Q,J_1)$ is a K\"ahler foliation, then it is {\em transversally hyper-K\"ahler} if and only if there exists a basic $(2,0)$-form $\Omega$ such that
$$
\Omega^n\neq 0\,,\qquad \N\Omega=0\,.
$$
If $(J_1,J_2,J_3)$ is simply a triple of transverse complex structures compatible with a fixed transverse metric and satisfying \eqref{quaternion},  we refer to $(g_Q,J_1,J_2,J_3)$ as a {\em transverse  hyper-Hermitian} structure. The main result of this section is the following theorem which is the foliated counterpart of the main theorem
in \cite{verb}

\begin{theorem}\label{mainluigi}
Let $(M,\F,g_Q,J_1)$ be a compact simply-connected manifold carrying a K\"ahler foliation. Assume that there exists a pair $(J_2,J_3)$ of transverse complex structures such that $(J_1,J_2,J_3)$ is a transverse hyper-Hermitian structure.  Then there exists a  transverse metric $g'_Q$ on $(M,\F)$ having holonomy contained in ${\rm Sp}(n)$. 
\end{theorem}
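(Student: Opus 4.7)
The plan is to follow Verbitsky's original strategy in \cite{verb}, replacing each analytic ingredient by its foliated counterpart. First, define the basic $2$-forms $\omega_r(\cdot,\cdot):=g_Q(J_r\cdot,\cdot)$ for $r=1,2,3$, so that $\omega_1$ is the K\"ahler form of $(\F,g_Q,J_1)$, and set
\[
\Omega:=\omega_2+i\omega_3.
\]
The quaternionic relations \eqref{quaternion} together with the compatibility of each $J_r$ with $g_Q$ force $\Omega$ to be a basic form of type $(2,0)$ with respect to $J_1$; a pointwise calculation on each fibre of $Q$ then shows that $\Omega^n$ is a nowhere-vanishing basic $(2n,0)$-form, proportional on each fibre to the transverse holomorphic volume element of $(g_Q,J_1)$.

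Since $M$ is simply-connected, Remark \ref{remarkCY} applied to $\Omega^n$ yields $c_B^1(\F)=0$. Corollary \ref{4.4} then produces a transverse K\"ahler form $\omega'$ in the basic cohomology class of $\omega_1$ whose transverse Levi-Civita connection $\N'$ satisfies ${\rm Hol}(\N')\subseteq{\rm SU}(2n)$; the associated transverse metric $g'_Q$ is therefore transversally Ricci-flat. Because the $J_1$-type decomposition of basic forms depends only on $J_1$ and not on the choice of compatible transverse metric, $\Omega$ remains a basic $(2,0)$-form with $\Omega^n$ nowhere vanishing in the new K\"ahler foliation $(\F,g'_Q,J_1)$.

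The heart of the argument is to show that $\Omega$ is transversally $J_1$-holomorphic, i.e.\ $\bar\partial_B\Omega=0$. This is the foliated analog of the key lemma in \cite{verb}, and it is here that the full hyper-Hermitian hypothesis (rather than merely hyper-complex) is essential. The plan is to introduce the unique metric connection $\tilde\N$ on $Q$ preserving $(g_Q,J_1)$ and having totally skew-symmetric transverse torsion --- the foliated Friedrich--Ivanov-type connection alluded to in the introduction --- and to express the $(2,1)_{J_1}$-part of $d\Omega$ in terms of the torsion of $\tilde\N$ contracted with $\Omega$; the quaternionic identities and the Hermitian compatibility of $J_2,J_3$ should then force this component to vanish. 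Since $\bar\partial_B$ depends only on $J_1$, the equality $\bar\partial_B\Omega=0$ persists after replacing $g_Q$ by $g'_Q$, and Proposition \ref{transvholomorphic} applied to the Calabi-Yau foliation $(\F,g'_Q,J_1)$ yields $\N'\Omega=0$. Therefore $(\F,g'_Q,J_1)$ is a K\"ahler foliation carrying a basic $(2,0)$-form $\Omega$ with $\Omega^n\neq 0$ and $\N'\Omega=0$, which is exactly the characterization of a transverse hyper-K\"ahler structure recalled at the beginning of this section, or equivalently ${\rm Hol}(\N')\subseteq{\rm Sp}(n)$. The main obstacle I anticipate is the Bismut-type calculation sketched in this last paragraph, where the hyper-Hermitian (as opposed to merely hyper-complex) assumption is actually used.
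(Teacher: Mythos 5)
Your opening moves coincide with the paper's: $\Omega=\omega_2+i\omega_3$ is indeed a basic $(2,0)$-form with respect to $J_1$ with $\Omega^n$ nowhere zero, Remark \ref{remarkCY} gives $c_B^1=0$, and El Kacimi's theorem (Corollary \ref{4.4}) produces the transversally Ricci-flat metric $g'_Q$. The gap is in your key step. The hypotheses do not give $\bar\partial_B\Omega=0$: what the skew-torsion connection of Lemma \ref{ivanov} actually yields (and what the paper proves inside Lemma \ref{fundpre}) is $\partial_1\Omega=0$, i.e.\ the vanishing of the $(3,0)_{J_1}$-component of $d\Omega$ --- the transverse analogue of the HKT condition --- not the vanishing of the $(2,1)_{J_1}$-component. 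Since $\Omega$ has type $(2,0)$, the two conditions $\partial_1\Omega=0$ and $\bar\partial_1\Omega=0$ together would force $d\Omega=0$, hence $d\omega_2=d\omega_3=0$, so the original structure $(g_Q,J_1,J_2,J_3)$ would already be transversally hyper-K\"ahler and no change of metric would ever be needed. That is false in general (already in the non-foliated case of \cite{verb} the metric must genuinely be modified), so the identity you hope to extract from the Bismut-type computation cannot hold; the $(2,1)_{J_1}$-part of $d\Omega$ is precisely the obstruction that survives.

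Consequently the real work is still ahead of you. Starting from $\partial_1\Omega=0$, the paper uses transverse Hodge theory (tautness being automatic since $M$ is simply-connected) to write $\Omega=\Omega_0+\partial_1\alpha$ with $\Omega_0$ harmonic, hence $\bar\partial_1$-closed by the transverse K\"ahler identities, hence parallel by Theorem \ref{SUn}. But nothing guarantees $\Omega_0^n\neq 0$: pointwise nondegeneracy of $\Omega$ is not inherited by its harmonic part, only the constancy of $|\Omega_0^n|$ is. Ruling out $\Omega_0^n\equiv 0$, i.e.\ showing that $\Omega^n$ cannot be $\partial_1$-exact, is the heart of Verbitsky's argument and occupies most of the proof of Lemma \ref{fundpre}: one writes $\partial_1\bar\psi=\theta\wedge\bar\psi$ for $\psi=\Omega^n$, introduces the twisted differential $D_1=\partial_1+\frac12\theta$ and the operator $L$ of wedging with $\Omega$, and establishes the commutation identities \eqref{VV3}, \eqref{VV1} and \eqref{VV2} (via Lemma \ref{anticommute}) so that $L$ descends to $D_1$-cohomology, where a nontrivial class obstructs exactness. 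None of this appears in your proposal, so even after replacing $\bar\partial_B$ by $\partial_B$ in your key claim the argument remains incomplete.
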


We divide the proof in a sequence of lemmas. The first one is a generalization of theorem 8.2 of \cite{FI} to the foliated non-contact case:

\begin{lemma}\label{ivanov}
For every Hermitian foliation $(\F,g_Q,J)$ on a manifold $M,$ there exists a unique connection $\widetilde{\nabla}$ on $Q$ preserving $(g_Q,J)$ and such that
\begin{enumerate}
\item[1.] $g_Q(T(X_Q,Y_Q),Z_Q)=-g(T(X_Q,Z_Q),Y_Q)$ for every $X,Y,Z\in \Gamma(TM);$

\vspace{0.2cm}
\item[2.] $\widetilde{\nabla}_{\xi}s=\N_{\xi}s$ for every $s\in \Gamma(Q)$ and $\xi\in \Gamma(L)$,
\end{enumerate}
where $T$ is the transverse torsion tensor
$$
T(X,Y)=\widetilde{\nabla}_XY_Q-\widetilde{\nabla}_{Y}X_Q-[X,Y]_{Q}\,.
$$

\end{lemma}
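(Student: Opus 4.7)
Plan. The result is the foliated Hermitian analogue of the existence and uniqueness of the Bismut (or KT) connection on a Hermitian manifold, so the natural strategy is to transplant the classical Bismut construction onto the transverse bundle $Q$. I would split the proof into a uniqueness step and an existence step, the main technical point being the compatibility of the construction with the foliated structure.

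For uniqueness, assume $\widetilde{\nabla}$ exists and set $A(X,s) := \widetilde{\nabla}_X s - \nabla_X s$ for $s\in\Gamma(Q)$ and $X\in\Gamma(TM)$. Condition (2) forces $A(\xi,\cdot)=0$ for $\xi\in\Gamma(L)$, so it suffices to determine $A(X,\cdot)$ for $X\in\sigma(Q)$. Preservation of $g_Q$ by both $\widetilde{\nabla}$ and $\nabla$ makes $A(X,\cdot)$ skew-symmetric, and vanishing of the transverse torsion of $\nabla$ gives $T(X_Q,Y_Q)=A(X,Y_Q)-A(Y,X_Q)$. Now run the standard Bismut argument: for vector fields $X,Y,Z$ on $M$, expand $d\omega(X,Y,Z)$ once with $\nabla$ (torsion-free, but $\nabla\omega\neq 0$ in the non-K\"ahler Hermitian case) and once with $\widetilde{\nabla}$ (which annihilates $\omega$ but has torsion $T$). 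Matching the two expressions and imposing total skew-symmetry of $c(X,Y,Z):=g_Q(T(X,Y),Z)$ produces the familiar closed formula for $c$ in terms of $d\omega$ and the transverse Nijenhuis tensor of $J$, which pins down $A$ uniquely.

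For existence, I would define $\widetilde{\nabla}$ by the formula obtained above and check by direct calculation that it preserves $g_Q$ and $J$ and has totally skew-symmetric transverse torsion; these are formal manipulations identical to the non-foliated case. The main obstacle is verifying that this prescription actually yields a connection on the transverse bundle $Q$ rather than on $L^\perp\subset TM$. The crucial observation is that $g_Q$, $J$, $\omega$, $d\omega$, and the transverse Nijenhuis tensor are all holonomy invariant, so the Bismut formula defines a basic tensor that descends to $Q$. An equivalent and perhaps cleaner route is to apply the Friedrich-Ivanov construction on the base of each local submersion $f_k\colon U_k\to T$ and then observe that the resulting Hermitian connections on the $T$-factors are preserved by the transition diffeomorphisms $\gamma_{jk}$, hence glue to a global connection on $Q$ satisfying (1) and (2); in either viewpoint, condition (2) itself is built in automatically since $A(\xi,\cdot)=0$ on $\Gamma(L)$.
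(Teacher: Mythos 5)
Your proposal is correct and follows essentially the same route as the paper: existence via the explicit Bismut-type prescription $g_Q(\widetilde{\nabla}_Z X,Y)=g_Q(\nabla_Z X,Y)+\tfrac12 d\omega(JZ,JX,JY)$ for $Z$ transverse and $\widetilde{\nabla}_\xi=\nabla_\xi$ along $L$, and uniqueness by showing that the difference tensor $S=\widetilde{\nabla}-\nabla$ (skew in its last two slots, cyclic from the torsion condition, vanishing on $\Gamma(L)$, and constrained by $\widetilde{\nabla}J=0$ together with the integrability of $J$) is forced to equal $d\omega(J\cdot,J\cdot,J\cdot)$. The paper carries out exactly the computation you sketch as ``the standard Bismut argument,'' so the two proofs coincide in substance.
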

\begin{proof}
Let $g$  be a bundle-like metric on $M$ inducing $g_Q$ and we identify $Q$ with orthogonal complement of $L$ in $TM$. We define explicitly $\tilde \nabla$ as 
\begin{equation}\label{N'}
g(\widetilde{\nabla}_{Z}X,Y)=\begin{cases}
\begin{array}{lcl}
g_Q(\nabla_{Z}X,Y)+\frac12 d\omega(JZ,JX,JY)& \mbox{if} &Z\in\Gamma(Q),\\
g_Q(\nabla_{Z}X,Y)& \mbox{if}& Z\in \Gamma(L)\,,
\end{array}
\end{cases}
\end{equation}
where $\omega$ is the fundamental form of $(g_Q,J)$. First of all, we observe that the connection $\widetilde{\nabla}$ described by formula \eqref{N'} satisfies the two labels of the statement (here we use that $g_Q(T(X,Y),Z)=-d\omega(JX,JY,JZ)).$ On the other hand, the new connection satisfies $\widetilde{\N}g_Q=0,$ since $\N$ preserves $g_Q$ and the second formula in \eqref{N'} forces $\widetilde{\nabla}_{\xi}J=0$ for $\xi \in \Gamma(L)$. Moreover if  $X,Y,Z$ lies in  $\Gamma(Q)$, a direct computation gives
\begin{equation}\label{domega}
2g_Q((\N_{Z}J)X,Y)=d\omega(X,JY,JZ)+d\omega(JX,Y,JZ)
\end{equation}
which implies that $\widetilde{\nabla}$ preserves $J$. This implies the first part of the proof.
%Moreover
%$$
%\begin{aligned}
%g^T(\N'_ZX,Y)-g^T(\N'_XZ,Y)=&\,g^T(\N_ZX,Y)-g^T(\N_XZ,Y)-\frac12 d\omega(JZ,JX,JY)+\frac12 d\omega(JZ,JY,JX)\\
%=&\,g^T([Z,X],Y)+d\omega(JZ,JY,JX)
%\end{aligned}
%$$
%which implies the first part of the proof.

\smallskip
We shall now prove the uniqueness. Assume to have a connection $\widetilde{\nabla}$ preserving $(g_Q,J)$ and satisfying the conditions 1 and 2 of the statement.  Therefore, we can write
\begin{equation}\label{newconnection}
g_Q(\widetilde{\nabla}_{Z}X,Y)=g_Q(\N_{Z}X,Y)+\frac12 S(Z,X,Y)
\end{equation}
for a tensor $S$ defined in $TM\times Q\times Q$. Since $\N$ and $\widetilde{\nabla}$ both preserve  $g_Q$, the tensor $S$ is skew-symmetric in the last two entries, i.e.
$$
S(Z,X,Y)=-S(Z,Y,X)\,.
$$The item 2 implies $S(\xi,X,Y)=0$ for all $\xi\in\Gamma(L)$. An easy computation involving the torsion of $\widetilde{\nabla}$, formula  \eqref{newconnection} and the item 1 implies
$$
S(X,Y,Z)=S(Y,Z,X)
$$
for all $X,Y,Z\in\Gamma(Q)$.
Using that $\widetilde{\nabla}$ preserves the structure $J$, it is easy to show that the following relation holds
\begin{equation}\label{Sd}
S(Z,JX,Y)+S(Z,X,JY)=-2g_Q((\nabla_Z J)X,Y)
\end{equation}
for $X,Y,Z\in \Gamma(Q)$. By considering  the cyclic sum in \eqref{Sd} we get
$$
\mathfrak{S}_{Z,X,Y}S(Z,JX,Y)=-\mathfrak{S}_{Z,X,Y}g_Q((\nabla_Z J)X,Y)=-d\omega(Z,X,Y)
$$
and
$$
\mathfrak{S}_{Z,X,Y}S(JZ,X,JY)=d\omega(JZ,JX,JY).
$$
Now the integrability of $J$ gives
$$
S(X, Y, Z)=S(JX,JY, Z)+S(JX, Y, JZ)+S(X, JY , JZ)
$$
which implies
$$
S(X, Y, Z)=d\omega(JZ,JX,JY)\,,
$$
as required.
\end{proof}

\begin{lemma}\label{anticommute}
Let $(\F,g_Q,J_1,J_2,J_3)$ be a transverse hyper-Hermitian foliation and we denote by $\partial_k$ the $\partial_B$ operator with respect to $J_k$. If  $\tilde{\partial}_2:=-J_2^{-1}\bar \partial_1 J_2$, then 
\begin{equation}\label{---}
\partial_1\tilde{\partial}_2=-\tilde{\partial}_2\partial_1\,.
\end{equation}
\end{lemma}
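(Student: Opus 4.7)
The plan is to deduce the identity $\partial_1\tilde\partial_2+\tilde\partial_2\partial_1=0$ as the $J_1$-bidegree $(2,0)$ component of the basic $dd^c$-identity
\[
d_B\circ(J_2^{-1}d_B J_2)+(J_2^{-1}d_B J_2)\circ d_B=0,
\]
which holds because $J_2$ is an integrable transverse complex structure. The strategy is entirely parallel to the non-foliated hypercomplex argument: the integrability of $J_1$ is built into the K\"ahler assumption, and that of $J_2$ is part of the transverse hyper-Hermitian data.

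First I would exploit the bigrading symmetry. Since $J_2$ anticommutes with $J_1$, its induced action on complex basic forms swaps the $J_1$-bigrading,
\[
J_2\colon\Lambda_B^{p,q}(M)\longrightarrow\Lambda_B^{q,p}(M),
\]
where the bigrading here and below is the one determined by $J_1$. Setting $A:=J_2^{-1}\partial_1 J_2$, it follows that $\tilde\partial_2=-J_2^{-1}\bar\partial_1 J_2$ has $J_1$-bidegree $(1,0)$ (the same as $\partial_1$), while $A$ has $J_1$-bidegree $(0,1)$ (the same as $\bar\partial_1$). Upon expanding
\[
0=\{\partial_1+\bar\partial_1,\,A-\tilde\partial_2\}
\]
and collecting terms by $J_1$-bidegree, the $(2,0)$-piece reduces to exactly $-(\partial_1\tilde\partial_2+\tilde\partial_2\partial_1)$, which therefore must vanish by uniqueness of the bigraded decomposition. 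As a bonus, the $(1,1)$- and $(0,2)$-pieces furnish analogous identities relating $A$ with $\bar\partial_1$ and $\tilde\partial_2$.

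The main obstacle I foresee is the verification of the $dd^c$-identity $\{d_B,J_2^{-1}d_B J_2\}=0$ on basic forms. I would reduce it to the classical non-foliated fact via the equality $J_2^{-1}d_B J_2=i(\bar\partial_2-\partial_2)$, which follows from the $J_2$-bigraded decomposition $d_B=\partial_2+\bar\partial_2$ together with the observation that $J_2$ acts on $\Lambda_B^{p,q}(M)$ (now with respect to the $J_2$-bigrading) as multiplication by $i^{p-q}$; the integrability of $J_2$ then supplies $\partial_2^2=\bar\partial_2^2=0$ and closes the computation. The genuinely new input is the interplay between the $J_1$- and $J_2$-bigradings mediated by the quaternionic relation $J_1J_2=-J_2J_1$; everything else is a transverse analogue of a classical calculation performed on the basic Dolbeault complex.
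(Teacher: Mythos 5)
Your argument is correct, but it takes a genuinely different route from the paper's. The paper proves the identity by a direct local computation: since $\partial_1$ and $\tilde{\partial}_2$ are odd derivations of the basic de Rham algebra, it suffices to check $\partial_1\tilde{\partial}_2+\tilde{\partial}_2\partial_1=0$ on basic functions and basic $(1,0)$-forms, and for a basic function $f$ and sections $Z,W$ of $Q^{1,0}$ the authors expand both terms and find that the sum equals $J_2N_{J_2}(Z,W)(f)$, which vanishes by the integrability of $J_2$ (the $1$-form case is omitted there). You instead package the same integrability input into the global operator identity $d_B\circ(J_2^{-1}d_BJ_2)+(J_2^{-1}d_BJ_2)\circ d_B=0$ --- via $J_2^{-1}d_BJ_2=i(\bar\partial_2-\partial_2)$ and $\partial_2^2=\bar\partial_2^2=0$ --- and then extract the lemma as the $J_1$-bidegree $(2,0)$ component, using that $J_2$ swaps $J_1$-types because $J_1J_2=-J_2J_1$; this is closer to Verbitsky's original argument, treats all degrees at once with no reduction to generators, and yields the companion $(1,1)$- and $(0,2)$-identities for free. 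One point you should make explicit: your bidegree bookkeeping requires $d_B=\partial_1+\bar\partial_1$ with no components of bidegree $(2,-1)$ or $(-1,2)$, i.e.\ the integrability of $J_1$ in addition to that of $J_2$. This is harmless --- the lemma is only invoked for transverse hyper-complex data and the paper's own proof equally needs $N_{J_2}=0$ --- but it deserves a sentence, since the paper's formal definition of a transverse complex structure only demands $J^2=-{\rm Id}_Q$.
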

\begin{proof}  Let $g$ be a bundle-like metric on $M$ inducing $g_Q$. We identify $Q$ with the orthogonal complement of $L$ in $TM$. 
The proof of the statement can be  then obtained by using standard algebraic computations. It's
enough to check \eqref{---} for basic maps and basic $(1,0)$-forms. We show how things work for functions and we omit the proof for $1$-forms.

Let $f$ be a basic map and let $Z,W$ be two smooth sections of $Q^{1,0}$. Then we have
$$
\begin{aligned}
(\partial_1\tilde \partial_2f)(Z,W)&=\,Z(\tilde \partial_2f(W))-W(\tilde \partial_2f(Z))- \tilde \partial_2f([Z,W])\\
&=\,ZJ_2W(f)-WJ_2Z(f)-J_2[Z,W](f)
\end{aligned}
$$
and
$$
(\tilde{\partial}_2\partial_1f)(Z,W)=J_2ZW(f)-J_2WZ(f)+J_2[J_2Z,J_2W](f)\,.
$$
Therefore
$$
\begin{aligned}
(\partial_1\tilde \partial_2+\tilde{\partial}_2\partial_1)(f)(Z,W)=&\,\left(ZJ_2W-WJ_2Z-J_2[Z,W]
+J_2ZW-J_2WZ+J_2[J_2Z,J_2W]\right)(f)\\
=&\,J_2N_{J_2}(Z,W)(f)=0\,,
\end{aligned}
$$
as required.
\end{proof}

%\begin{lemma}\label{finalpre}
%Let $(M,g,\F,J)$ be a compact manifold endowed with a K\"ahler foliation of codimension $q=4n$. Assume that there exists a complex basic form $\Omega$
%of type $(2,0)$ satisfying
%$$
%d\Omega=0\,,\quad \Omega^{n}\neq 0\,.
%$$
%Then there exists a bundle-like metric $g'$ on $(M,\F)$ such that ${\rm Hol}(\N')\subseteq {\rm Sp}(n)$, where $\N'$ is the transverse Levi-Civita connection of $g'$.
%\end{lemma}
%
%\begin{proof} Since $\Omega^n$ is a nowhere vanishing basic $(2n,0)$-form on $M$, then $c_1^{B}(M)=0$. Hence, theorem \ref{transvcalabiyau} implies  the existence of a transverse K\"ahler form $\omega'$ in the same cohomology class as the K\"ahler form $\omega$ having vanishing transverse Ricci form. Since $\Omega$ is closed, it is in particular holomorphic and theorem \ref{SUn} implies that it is parallel with respect to the transverse Levi-Civita connection induced by $g'.$ Hence ${\rm Hol}(\N')\subseteq{\rm Sp}(n)$, as required.
%\end{proof}

\begin{lemma}\label{fundpre}
Let $(M,\F,g_Q,J_1)$ be a compact  manifold with a taut Calabi-Yau  foliation of codimension $4n$. Assume that there exists a pair $(J_2,J_3)$ of transverse complex structures such that $(J_1,J_2,J_3)$ is a transverse hyper-complex structure. Then  $g_Q$ has transverse holonomy contained in ${\rm Sp}(n)$.
\end{lemma}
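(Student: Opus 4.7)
The plan is to reduce the holonomy reduction to a parallelism statement for a single basic form and then to invoke the Bochner principle available on Calabi-Yau foliations. Concretely, I introduce the basic complex-valued form
\[
\Omega := \omega_2 + i\omega_3 \in \Lambda^2_B(M,\C), \qquad \omega_r(X,Y) := g_Q(J_r X, Y),
\]
and aim to show $\N \Omega = 0$. The quaternion relations $J_1J_2 = -J_2J_1 = J_3$ together with the compatibility of $g_Q$ with each $J_r$ imply that $\Omega$ has pure bidegree $(2,0)$ with respect to $J_1$, and a direct algebraic check shows that $\Omega^n$ is a nowhere vanishing section of $\Lambda^{2n,0}(Q)$. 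Granted $\N\Omega = 0$, one obtains $\N\omega_2 = \N\omega_3 = 0$; combined with $\N g_Q = 0$ this forces $\N J_2 = \N J_3 = 0$, and together with the K\"ahler identity $\N J_1 = 0$ this gives ${\rm Hol}(\N) \subseteq {\rm Sp}(n)$.

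The heart of the argument is therefore to establish the transverse holomorphicity $\bar\partial_B \Omega = 0$ with respect to $J_1$. My plan is to combine Lemma \ref{ivanov}, which produces a unique Bismut-type connection $\widetilde{\N}$ on $Q$ preserving $g_Q$ and $J_2$ with skew-symmetric transverse torsion, with the anti-commutation relation $\partial_1\tilde\partial_2 + \tilde\partial_2 \partial_1 = 0$ of Lemma \ref{anticommute}. The strategy is to exploit the K\"ahler condition $d\omega_1 = 0$ to constrain the torsion of $\widetilde{\N}$ so that $\widetilde{\N}$ in fact preserves the full hyper-complex structure $(J_1, J_2, J_3)$, and hence $\widetilde{\N}\Omega = 0$; the vanishing $\bar\partial_B \Omega = 0$ can then be read off from $\widetilde{\N}\Omega = 0$ by noting that the skew-symmetric torsion contribution to the difference between $\bar\partial_B$ and the $\widetilde{\N}$-induced Dolbeault operator vanishes on a pure $(2,0)$-form. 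This mirrors the key step in Verbitsky's proof \cite{verb} in the non-foliated setting, with the additional bookkeeping of the mean curvature form imposed by the foliated setup.

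This second step is the principal obstacle: implementing the scheme requires careful control of the difference tensor $\widetilde{\N} - \N$ and of the types of all the quantities involved, together with a non-trivial use of the integrability of $J_1$ in conjunction with $d\omega_1 = 0$. Once $\bar\partial_B \Omega = 0$ is in hand the argument closes quickly: since $\F$ is Calabi-Yau and $\Omega$ is transversally holomorphic, Proposition \ref{transvholomorphic} yields $\N\Omega = 0$, and the holonomy reduction ${\rm Hol}(\N) \subseteq {\rm Sp}(n)$ follows as in the first paragraph.
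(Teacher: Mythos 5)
Your overall architecture (produce a basic $(2,0)$-form $\Omega$ with $\Omega^n\neq 0$, prove $\N\Omega=0$, conclude ${\rm Hol}(\N)\subseteq{\rm Sp}(n)$) is the right one, but the step you yourself flag as ``the principal obstacle'' is exactly where the proof breaks, and the mechanism you propose for it does not work. The identity that the hypercomplex structure together with $d\omega=0$ actually yields is $\partial_1\Omega_1=0$, \emph{not} $\bar\partial_1\Omega_1=0$: this is what lemma \ref{ivanov} delivers, via the coincidence of the three Bismut-type connections $\widetilde\N^1=\widetilde\N^2=\widetilde\N^3$ and the resulting relation $J_2\,dF_2=J_3\,dF_3$. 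Your claim that $\widetilde\N\Omega=0$ forces $\bar\partial_1\Omega=0$ because ``the skew-symmetric torsion contribution vanishes on a pure $(2,0)$-form'' is false: when $\widetilde\N\Omega=0$ one has $d\Omega(X,Y,Z)=\mathfrak{S}_{X,Y,Z}\,\Omega(T(X,Y),Z)$, and the $(1,2)+(2,1)$ components of the torsion three-form (which are nonzero unless the auxiliary metric is already K\"ahler for $J_1$) contribute to the $(2,1)$-part of $d\Omega$. Were $\bar\partial_1\Omega=0$ automatic, the Bochner principle would make Verbitsky's theorem a one-line corollary, which it is not. The actual proof must therefore pass to the basic-harmonic representative $\Omega$ of $\Omega_1$ (using tautness and $\partial_1\Omega_1=0$), apply theorem \ref{SUn} to get $\N\Omega=0$, and then confront the genuinely hard point that your proposal omits entirely: showing that the \emph{harmonic} form $\Omega$ still satisfies $\Omega^n\neq 0$. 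This non-degeneracy is proved by contradiction — if $\Omega^n\equiv 0$ then $\psi=\Omega_1^n$ would be $\partial_1$-exact — via the twisted complex $D_1=\partial_1+\frac12\theta$ with $\partial_1\bar\psi=\theta\wedge\bar\psi$, the Lefschetz-type operator $L(\beta)=\beta\wedge\Omega_1$, and the commutation identities \eqref{VV3}, \eqref{VV1}, \eqref{VV2}; this is where lemma \ref{anticommute} is actually used, whereas in your sketch it plays no operative role.

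A secondary but real point: the lemma assumes only that $(J_1,J_2,J_3)$ is transversally hyper-\emph{complex}, so $g_Q$ need not be compatible with $J_2$ and $J_3$ (and indeed, when the lemma is invoked in the proof of theorem \ref{mainluigi}, the El Kacimi metric $g'_Q$ has no such compatibility). Your forms $\omega_2=g_Q(J_2\cdot,\cdot)$ and $\omega_3$ are then not even skew-symmetric. One must first average, setting $g_1=\frac12 g_Q(\cdot,\cdot)+\frac12 g_Q(J_2\cdot,J_2\cdot)$, and build $\Omega_1=\frac12(F_2+iF_3)$ from the fundamental forms of $g_1$; the holonomy reduction for $g_Q$ is then obtained at the end from the $\N$-parallel non-degenerate $(2,0)$-form $\Omega$, not from parallelism of $g_Q(J_2\cdot,\cdot)$ itself. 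Both gaps are repairable, but only by essentially reconstructing the Hodge-theoretic and non-vanishing arguments that constitute the bulk of the paper's proof.
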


\begin{proof}
Let $g_1$ be the transverse Riemannian metric
$$
g_1(\cdot,\cdot)=\frac12 g_Q(\cdot,\cdot)+\frac12 g_Q(J_2\cdot,J_2\cdot)\,.
$$
This metric is compatible with each $J_k$. For each $k=1,2,3,$ we denote by $F_k$ the fundamental form of $(\F,g_1, J_k)$ and let
$$
\Omega_1:=\frac12\left(F_2+i F_3\right).
$$
The basic form $\Omega_1$ is of type $(2,0)$ with respect to $J_1$ and satisfies
$$
\Omega_1^{n}\neq 0\,.
$$
First of all we show that $\Omega_1$ is $\partial_1$-closed, where $\partial_1$ denotes the $\partial_B$-operator with respect to $J_1$. Let $\Omega_2$ the $(2,0)$-component of $\omega$ with respect to $J_2$. The form $\Omega_2$ is basic and from the closure of $\omega$ we get
$$
\partial_2\Omega_2=0,
$$
where $\partial_2$ denotes
the $\partial_B$-operator computed with respect to $J_2$. On the other hand,
%if $X,Y$ are smooth vector fields in $M$ we have
%$$
%\begin{aligned}
%\Omega_2(X,J_1 Y)=&\, \frac14 \omega'(X-i J_2 X, J_1 Y-i J_2 J_1Y )=\frac14 \omega'(X-i J_2 X, J_1( Y+i J_2Y))\\
%=&\,\frac14 g'(X-i J_2 X,  Y+i J_2Y)
%\end{aligned}
%$$
an easy computation yields 
$$
\Omega_2=\frac{1}{2}(F_1-i F_3)\,.
$$
Taking into account the formula
\begin{equation*}
\partial_k\gamma=\frac{1}{2}\big(d+(-1)^r i J_k dJ_k\big)\gamma
\end{equation*}
holding for every complex basic form $\gamma$ of degree $r$ (see e.g.  \cite{GP} for the non-foliated case), we deduce that condition $\partial_2\Omega_2=0$ forces to have
$$
J_1\,dF_1=J_3\,dF_3\,.
$$
Let $\tilde \N^k$ be the connection with skew-symmetric torsion induced by $(g_1,J_k)$ as in lemma \ref{ivanov}. Formula \eqref{N'} implies that all the $\tilde \N^k$'s have the same torsion and then we find $\tilde\N^1=\tilde\N^3$ and condition $J_2=J_3J_1$ implies $\tilde\N^1J_2=0$. Therefore
$$
\tilde\N^1=\tilde\N^2=\tilde\N^3
$$
which gives $J_2dF_2=J_3 dF_3$ and thus we obtain that $\partial_1 \Omega_1=0.$ Hence $\Omega_1$ satisfies
$$
\Omega_1^n\neq 0\,,\quad \partial_1\Omega_1=0\,.
$$

\noindent Since $M$ is compact and $\F$ is taut 
%\footnote{(\textcolor{red}{we assume here that the mean curvature is zero} \textcolor{magenta}{in this section we are working with transverse metric, so I think that speaking of mean curvature has not really sense since we don't have a metric on the whole space})\textcolor{blue}{I think that we have already assumed that the metric is bundle-like in all the paper, but we need the vanishing of the mean curvature because the formula $\Delta=2\Delta^{\partial_1}=2\Delta^{\bar\partial_1}$ does not hold in general.}}
we can use the transverse Hodge theory for writing 
$$
\Omega_1=\Omega+\partial_1 \alpha,
$$
where $\Omega$ is the $g_Q$-basic harmonic component of $\Omega_1$. Since $\Omega$ is transversally holomorphic, theorem \ref{SUn} implies that $\N\Omega_1=0$. In particular the norm of $\Omega^n$ is constant. In order to finish the proof, we have to show that $\Omega^n_p\neq 0$ for every $p\in M$. Assume by contradiction $\Omega_p^n=0$ at a point $p\in M$ and let $\psi=\Omega_1^n$. Since the norm of $\Omega^n$ is constant,  $\Omega^n\equiv 0$ and
$$
\psi=\partial_1 \beta
$$
for a basic form $\beta$.
The last step consists in showing that $\psi$ cannot be $\partial_1$-exact. We will obtain this result by adapting the last step in the proof of the main theorem in \cite{verb} to our case.
%Assume $\psi=\partial_1\beta$ and consider the transverse differential operator
%$$
%D\colon \oplus_{r} \Lambda_B^{r,0}\to\oplus_r \Lambda_B^{r,0}
%$$
%which is defined for
%Using the Stokes theorem it is easy to show that $M$ cannot have a nowhere vanishing $\partial_1$-exact basic $(2n,0)$-form $\psi$. Assume by contradiction  $\psi=\partial_1\theta$.  Then we can surely write $f\psi=\eta$ for a nowhere vanishing basic complex map $f$, $\eta$ being  a nowhere vanishing basic holomorphic $(2n,0)$-form.
%Since $\mathcal F$ is taut, there exists a volume form $\chi$ on the leaves which is $\F$-relatively closed. Let
%$$
%V=\eta\wedge \bar \psi\wedge \chi.
%$$
%Then $V$ is nowhere vanishing and $d$-exact since
%$$
%V=\eta\wedge \bar \psi\wedge \chi=\eta\wedge \bar \partial_1 \theta\wedge \chi=d(\eta\wedge \theta\wedge \chi)\,.
%$$
%Hence $V$ is an exact volume form which is a contradiction since $M$ is compact.
Taking into account the isomorphism  $\Lambda_B^{2n,1}(M)\cong\Lambda_B^{2n,0}(M)\wedge \Lambda_B^{0,1}(M)$, there exists a basic $(1,0)$-form $\theta$ which is $\partial_1$-closed such that
\begin{equation}\label{verb}
\partial_1 \bar \psi=\theta  \wedge \bar \psi\,.
\end{equation}
%Therefore there exists a non-vanishing basic map $f\colon M\to\C$ such that
%$$
%\partial_1(f\,\bar \psi)=0\,,
%$$
%i.e.
%$$
%\partial_1 f=-f\,\theta .
%$$
Let us consider the complex
\begin{equation}
\label{como}
\Lambda^{0,0}_B(M)\xrightarrow{\partial_1+\frac12 \theta}\Lambda^{1,0}_B(M)\xrightarrow{\partial_1+ \frac12\theta}\Lambda^{2,0}_B(M)\xrightarrow{\partial_1+\frac12\theta}\cdots
\end{equation}
where
$$
\left(\partial_1+\frac12 \theta\right)\alpha=\partial_1\alpha+\frac12\theta\wedge\alpha\,.
$$
Since $\theta$ is $\partial_1$-closed, we have $\left(\partial_1+\frac12\theta\right)^2=0$. In view of \cite[thm. 2.8.7]{EKA} the cohomology of the complex \eqref{como} is finite-dimensional and its cohomology groups can be identified  with the kernel of the Laplacian operator associated with $D_1=\partial_1+\frac12\theta$.
Now, following the approach of \cite{verb2}, we observe that the operator
$$
L\colon \Lambda^{*,0}_B(M)\to \Lambda^{*+2,0}_B(M)
$$
defined as $\beta\mapsto \beta\wedge \Omega_1$
%\footnote{\textcolor{red}{shouldn't be $\Omega_1^n$ ? }:  \textcolor{magenta}{here we work with $L$; then the multiplication for $\Omega_1^n$ is given by $L^n$} \textcolor{blue}{Ok, I change in the isomorphism the exponent of $\Lambda_B^{n,n}$}} 
preserves the kernel of  $D_1D_1^*+D_1^*D_1$,
where
$$
D_1^*(\beta):=\partial_1^*\beta+\frac12 *_B(\bar \theta\wedge*_B\beta)=-*_B\bar\partial_1(*_B\beta)+\frac12 \bar\theta\lrcorner\beta
$$
%\footnote{\textcolor{red}{The Hodge operator corresponds to the metric $g_Q$ or $g_1$?? }\textcolor{magenta}{to $g_Q$} \textcolor{blue}{I think it should be for the metric $g_1$ because the formula $\bar{Z}\lrcorner \Omega_1=J_2(\bar Z)$ holds for the metric $g_1$}} 
is the formal adjoint to $D_1$ with respect the scalar product \eqref{scalarbasic}  induced by $g_1$.
This basically comes from the following three identities which will be proved afterwards
\begin{eqnarray}
\label{VV3}&& LD_1-D_1L=0\,,\\
\label{VV1}&& D_1D_2+ D_2D_1=0\,,\\
\label{VV2}&& L D_1^*- D_1^* L=\,D_2\,,
\end{eqnarray}
where
$$
D_2(\beta)=-J_2^{-1}\bar \partial_1J_2(\beta)+\frac12 J_2(\bar \theta)\wedge \beta=\tilde\partial_2(\beta)+\frac12 J_2(\bar  \theta)\wedge \beta\,.
$$
Indeed assuming that equalities \eqref{VV3}, \eqref{VV1}  and \eqref{VV2} hold, we write
$$
\begin{aligned}
L(D_1D_1^*+D_1^*D_1)=&\,L D_1D_1^*+LD_1^*D_1=D_1 L D_1^*+D_1^*LD_1+D_2D_1\\
=&\,D_1 D_1^*L+D_1D_2+D_1^*D_1L+D_2D_1=(D_1D_1^*+D_1^*D_1)L\,,
\end{aligned}
$$
as required.
%can construct an $8$-dimensional Lie algebra $\g$ isomorphic to the K\"ahler-de Rham superalgebra.  Let
%$$
%L\colon \Lambda^{*,0}_B(M)\to \Lambda^{*+2,0}_B(M)
%$$
%be the multiplication by $\Omega$, $L^*=*L*$ its formal adjoint, $H:=LL^*+L^*L$,
%$$
%^n\!\partial_2:=\partial_2+\frac12\,J_2(\bar \theta)
%$$
%and $^n\partial_1^*, ^n\partial_2^*$ the formal adjoints of $^n\partial_1,\,^n\partial_2$,respectively. Then
%$$
%\g:=\langle L,L^*,H, ^n\!\partial_1,^n\!\partial_2,^n\!\partial_{1}^*, ^n\!\partial_{2}^*\rangle
%$$
%its an $8$-dimensional Lie algebra which is isomorphic to the {\em K\"ahler-de Rham superalgebra} (see \cite{verb2}).  This implies that $L$ commutes with the Laplacian operator of $^n\!\Delta=(^n\!\partial_1\,^n\!\partial_1^*+^n\!\partial_1^*\,^n\!\partial_1)$.
In particular the map $[f]\mapsto [f\psi]$ induces an isomorphism
$$
\ker\, D_1\cap \Lambda^{0,0}_B(M)\to \frac{\Lambda_B^{2n,0}(M)}{D_1(\Lambda^{2n-1,0}_B(M))}.
$$

Assume by contradiction $\psi=\partial_1\beta$. We can certainly find a nowhere vanishing closed basic $(n,0)$-form $\eta$ such that
$$
\bar \psi=g\bar \eta
$$
for a nowhere vanishing basic map $g$. Then
$$
\theta \wedge \bar \psi=\frac{1}{g}\partial_1(g)\wedge \bar \psi\,,
$$
i.e. $\partial_1g=g\theta$. Let $f=g^{-\frac{1}{2}}$; then $f$ gives a non-trivial cohomology class in
$\ker D_1\cap \Lambda^{0,0}_B(M)$. Moreover, $[f\psi]=0$, since
$$
D_1(f\beta)=f\psi\,.
$$
Hence $\psi$ cannot be $\partial_1$-exact and the claim follows.
%Now,
%$$
%\left(\partial_1-\frac12 \theta\right)(f\beta)=\partial_1f\,\wedge \beta+ f\partial_1\psi-\frac12 f\theta\wedge\beta=f\partial_1\psi
%$$
%implies that $f\psi$ is $(\partial_1+\theta)$-exact. On the other hand
%$$
%(\partial_1-\frac12 \theta)*^T(f\psi)=(\partial_1-\frac12\theta)(f\bar \psi)=\partial_1f\wedge \bar \psi+f\partial_1\bar\psi-\frac12 f\theta\wedge \bar \psi=\left(\partial_1f+\frac12 f\theta\right)\wedge \bar \psi=0
%$$
%A direct computation gives that the form $f\psi$ is $\left(\partial_1 +\theta\right)$-harmonic. Hence
%and $f\psi$ is a basic form which is  both $\left(\partial_1+\frac12 \theta\right)$-exact and $\left(\partial_1+\frac12 \theta\right)$-harmonic. This gives a contradiction.

%\textcolor{magenta}
%{$$
%\begin{aligned}
%& \tilde\partial_2=-J_2^{-1}\bar \partial_1J_2\\
%& \partial_1\tilde \partial_2=-\tilde\partial_2\partial_1\\
%&[L,D^*]=D_2
%\end{aligned}
%$$
%We have to prove that
%$$
%\alpha\wedge J(\bar \alpha)\wedge \Omega^{n-1}=|\alpha|^2\Omega^{n}
%$$
%Locally with respect to a suitable othonormal frame $\{e_k\}$, we may assume that  $J_1$ and $J_2$ takes their standard expression
%$$
%\begin{aligned}
%& J(e_k)=-e_{k+2n}\,,\quad J(e_{k+2n})=e_k,\quad for k=1,\dots 2n\,,\\
%& dkc
%\end{aligned}
%$$
%}

\medskip 
In order to finish the proof it remains to show that formulas \eqref{VV3}, \eqref{VV1}, \eqref{VV2} are true. For the first one, we simply  have
$$
D_1L\gamma= \partial_1(\gamma\wedge \Omega_1)+ \frac12 \theta\wedge \gamma\wedge \Omega_1 =
\partial_1 \gamma\wedge\Omega_1+\left(\frac12 \theta\wedge  \gamma \right)\wedge \Omega_1=LD_1\gamma
$$
for every $\gamma\in \Lambda^{k,0}_B(M) $. The proof of the other two formulas is a bit more involute and we need lemma \ref{anticommute} and some computations of linear algebra proved as in \cite{verb}. 
%(see formula \eqref{starsigma} and \eqref{J2alpha} at the end of the present paper).
%$$
%\begin{aligned}
%*_B1=&\,\left(\frac{1}{n!}\right)^{\!2}\,\Omega_1^n\wedge \bar\Omega_1^n\\
%*_B\Omega_1 =&\, 2n\left(\frac{1}{n!}\right)^{\!2} \Omega_1^{n} \wedge \bar \Omega_1^{n-1}\\
%*_B\eta =&\, -\left(\frac{1}{n!}
%\right)^2 J_2(\eta)\wedge\Omega_1^n\wedge\bar\Omega_1^{n-1}\\
%J_2(\bar \theta)\wedge \eta=&\,\Omega_1\wedge *_B(\bar \theta\wedge *_B\eta)-*_B(\bar \theta\wedge *_B (\eta\wedge \Omega_1))\,.
%\end{aligned}
%$$
%\begin{eqnarray}
%&&*_B1=\left(\frac{1}{n!}\right)^{\!2}\,\Omega_1^n\wedge \bar\Omega_1^n\\
%&&*_B\Omega_1 =\, 2n\left(\frac{1}{n!}\right)^{\!2} \Omega_1^{n} \wedge \bar \Omega_1^{n-1}\\
%&&*_B\eta =\, -\left(\frac{1}{n!}
%\right)^2 J_2(\eta)\wedge\Omega_1^n\wedge\bar\Omega_1^{n-1}\,\,\,\, \mbox{ for every }\eta\in \Lambda_B^{1,0}(M)\\
%&&J_2(\bar \theta)\wedge \eta=\,\Omega_1\wedge *_B(\bar \theta\wedge *_B\eta)-*_B(\bar \theta\wedge *_B (\eta\wedge \Omega_1))\,.
%\end{eqnarray}
%First of all the operator
%$\partial_1$ and $\tilde{\partial}_2:=-J_2^{-1}\bar \partial_1 J_2$ anti commute, i.e.
%$$
%\partial_1\circ J_2^{-1}\bar \partial_1 J_2=-J_2^{-1}\bar \partial_1 J_2\circ \partial_1\,.
%$$
First of all, it is easy to show that
$$
J_2\bar\psi=\psi\,.
$$
Therefore $\bar \partial_1(J_2\bar \psi)=\bar \theta \wedge \psi$ which implies the useful relation
$$
\tilde \partial_2 \bar\psi= J_2\bar \theta\wedge \bar\psi\,,
$$
where $\tilde \partial_2 $ is defined in lemma \ref{anticommute}. 
Now applying \eqref{---} to $\bar\psi$ we easily get formula \eqref{VV1}. Formula \eqref{VV2} is equivalent to
\begin{equation}
\big(L D_1^*- D_1^* L)\beta-\frac12\,J_2(\bar\theta)\wedge \beta=\tilde \partial_2(\beta),
\end{equation}
which it is enough to be checked for smooth basic maps and $\tilde \partial_2$-closed $(1,0)$-forms. Let $f$ be a smooth basic map, then
$$
LD_1^*f=0
$$
and
$$
\begin{aligned}
-D_1^*Lf-\frac12 fJ_2\bar\theta=&\,*_B(\bar \partial_1(*_Bf\Omega_1))-\frac12 f\bar\theta\lrcorner \Omega_1-\frac12 fJ_2\bar\theta\\
=&\,*_B\left( \bar \partial_1(f)\wedge (*_B\Omega_1)+ f \bar \theta\wedge *_B\Omega_1\right)-fJ_2\bar\theta=J_2(\bar\partial_1(f))
=\,\tilde\partial_2f
\end{aligned}
$$
where we have used that $*_B\Omega_1=n(\frac{1}{n!})^2\Omega_1^n\wedge\bar\Omega_1^{n-1}$ and the natural identity 
%\footnote{\textcolor{blue}{This formula holds for the metric $g_1$ and not for $g_Q,$ that's why I think we should work on $g_1$}} 
$$
*_B(\bar Z \wedge*_B\Omega_1)=\bar Z \lrcorner \Omega_1=J_2\bar Z.
$$
Now we prove \eqref{VV2} for a basic $\tilde\partial_2$-closed $(1,0)$-form $\alpha$.
%\textcolor{blue}{Why shall we consider $\tilde\partial_2$-closed form and not just a basic form. I have proved it at the end of the paragraph.}  
We have
$$
LD_1^*\alpha= \left(\partial_1^{*} \alpha\right)\,\Omega_1+\frac12 *_B(\bar \theta\wedge *_B\alpha)\,\Omega_1=\left(\partial_1^{*} \alpha\right)\Omega_1+\frac12 g_1(\bar\theta,\alpha)\Omega_1\,.
$$
$$
\begin{aligned}
-D_1^*L\alpha=&-\partial_1^{*} (\alpha\wedge \Omega_1)-\frac12*_B\left( \bar \theta\wedge *_B(\alpha\wedge \Omega_1)\right)=-\partial_1^{*} (\alpha\wedge \Omega_1)-\frac12\bar\theta\lrcorner(\alpha\wedge\Omega_1)\,.
\end{aligned}
$$

$$
D_2\alpha=\frac12 J_2(\bar  \theta)\wedge \alpha.
$$
%$$
%[L_\Omega,\Lambda_\eta] = L_{J_2(\bar \eta)}
%$$
Now taking into account that $\alpha$ is $\tilde\partial_2$-closed, we also have
$$
\begin{cases}
 \left(\partial_1^{*} \alpha\right)\,\Omega_1=-*_B(\bar \theta\wedge *_B\alpha)\,\Omega_1\,,\\
 \partial_1^{*} (\alpha\wedge \Omega_1)=-*_B\left( \bar \theta\wedge *_B(\alpha\wedge \Omega_1)\right)
\end{cases}
$$
and therefore 

$$
\begin{aligned}
\left(L D_1^*- D_1^* L\right)\alpha=&\,-\frac12 *_B(\bar \theta\wedge *_B\alpha)+\frac12 *_B\left( \bar \theta\wedge *_B(\alpha\wedge \Omega_1)\right) \\
=&\,\frac12 \bar\theta\lrcorner(\alpha\wedge\Omega_1)-\frac12(\bar\theta\lrcorner\alpha)\Omega_1=-\frac12 \alpha\wedge(\bar\theta\lrcorner\Omega_1)=\frac12 J_2(\bar\theta)\wedge\alpha.
\end{aligned}
$$
i.e.
$$
\left(L D_1^*- D_1^* L\right)\alpha=D_2 \alpha\,,
$$
as required.
\end{proof}

Now we are ready to prove  theorem \ref{mainluigi}
\begin{proof}[Proof of theorem $\ref{mainluigi}$] 
Assume that there exists a pair $(J_2,J_3)$ of transverse complex structures as in the statement. Let 
\begin{equation*}%\label{omega}
\Omega(\cdot,\cdot):=\frac12\left(g_Q(J_2\cdot,\cdot)+ig_Q(J_3\cdot,\cdot)\right).
\end{equation*}
This form can be regarded as a basic form of type $(2,0)$ with respect to $J_1$; therefore $\psi:=\Omega^{n}$ is a nowhere vanishing {\em basic}  $(2n,0)$-form and remark \ref{remarkCY} implies 
%First of all we observe that the existence of $\tilde \psi$ implies that the first basic 
%Chern class of $(M,\F,J_1)$ is zero. Indeed, we can write
%$$
%\bar \partial_1\tilde \psi = \bar \gamma\wedge \tilde \psi
%$$
%for a suitable basic form $\bar \gamma$ of type $(1,0)$ with respect to $J_1$. Now $\bar\partial_1^2\psi=0$ readily implies that $\bar \gamma$ is $\bar\partial_1$-closed and since
%$M$  is simply connected, we can write $\bar \gamma=\bar \partial_1\vartheta$ for a suitable  basic smooth map $\vartheta$. Now by construction
%$$
%K:={\rm e}^{-\vartheta} \tilde \psi
%$$
%is a nowhere vanishing basic form of type $(2n,0)$ with respect to $J_1$ and satisfying $\bar \partial_1K=0$.
that the first basic Chern class of $(M,\F,J_1)$ vanishes. 
Since $M$ is simply-connected, the foliation is taut and theorem  \ref{transvcalabiyau} ensures the existence of a transverse metric $g'_Q$ whose transverse Levi-Civita connection $\nabla'$ has holonomy contained in ${\rm SU}(2n)$. 
Hence $(M,\F,g_Q',J_1,J_2,J_3)$ satisfies the hypothesis of lemma \ref{fundpre} and $g'_Q$ has transverse holonomy group contained in ${\rm Sp}(n)$.  
\end{proof}

%\noindent Lemma \ref{finalpre} and lemma \ref{fundpre} imply theorem \ref{mainluigi}.

\medskip
We remark that in the non-foliated case the statement of theorem \ref{mainluigi} holds without the assumption on $M$ to be simply-connected (see \cite{verb}). Indeed, every compact K\"ahler manifold with vanishing first Chern class can be covered by a
K\"ahler manifold with holomorphically trivial canonical bundle and this fact allows us to  drop the  assumption on $M$ to be simply-connected. Unfortunately, it seems that a similar construction cannot be performed in the foliated case. 

\medskip
Examples of hyper-K\"ahler foliations are provided by submersions in hyper-K\"ahler manifolds. Other examples are given in the next section in the set-up of Sasakian manifolds. Moreover, a naif way for constructing hyper-K\"ahler foliations consists in generalizing proposition \ref{coisotropic} to the hyper-K\"ahler case. Indeed, let $(M,g,J_1,J_2,J_3)$ be a hyper-K\"ahler manifold with induced fundamental forms $(\omega_1,\omega_2,\omega_3)$ and let $i\colon N\hookrightarrow M$ be a submanifold. Assume that $N$ is coisotropic with respect to both $\omega_1$ and $\omega_2$ and
$$
(TN)^{\omega_1}=(TN)^{\omega_2}\,.
$$
In this case $\F_p:=(T_pN)^{\omega_1}$  induces a locally trivial hyper-K\"ahler foliation on $N$.
%A trivial example is the following. Let $(\T^{8},g,J_1,J_2,J_3)$ be the $8$-dimensional torus with the standard hyper-K\"ahler structure, where $g=\delta_{ij}dx^i\otimes dx^j$ and
%$$
%\begin{aligned}
%&\omega_1=dx^1\wedge dx^2+dx^3\wedge dx^4+dx^5\wedge dx^6+dx^7\wedge dx^8\,,\\
%&\omega_2=dx^1\wedge dx^5+dx^2\wedge dx^6+dx^3\wedge dx^7+dx^4\wedge dx^8\,,\\
%&\omega_3=dx^1\wedge dx^6+dx^5\wedge dx^2+dx^3\wedge dx^8+dx^7\wedge dx^4\,.
%\end{aligned}
%$$
%$\{x^1,\dots,x^8\}$ being the usual coordinates on $\T^{8}$. Consider the torus $\T^6\hookrightarrow \T^{8}$ be the subtorus \lq\lq spanned \rq\rq by the coordinates $\{x^1,x^2,x^4,x^5,x^6,x^7\}$. In this case $\T^6$ is a coisotropic submanifold with respect to $\omega_1$ and $\omega_2$ and
%$$
%\left(T\T^6\right)^{\omega_1}=
%\left(T\T^6\right)^{\omega_2}=\langle \partial_{x^4},\partial_{x^7}\rangle\,.
%$$
%then the foliation $\F$ in $T\T^6$ is spanned by $\partial_{x^4},\partial_{x^7}$ and $\F^{\perp}$ is spanned by $\{\partial_{x^1},\partial_{x^2},\partial_{x^5},\partial_{x^6}\}$. }

\section{Sasakian manifolds with special transverse holonomy}
In this section, we adapt the results of the previous sections to the Sasakian case having special transverse holonomy. In addition to the previous part of the paper, we consider also the case of a foliated manifold with trivial transverse holonomy group.

\subsection{Riemannian foliations with $\rm{ Hol}(\N)=0$} We have the following result:
\begin{theorem} Let $(M,\mathcal{F},g_Q)$ be a compact manifold endowed with a Riemannian foliation of codimension $q$. The transversal holonomy group is trivial if and only if  $M$ is the total space of a fibration over the flat torus $\mathbb{T}^q.$
\end{theorem}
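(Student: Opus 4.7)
The easy direction is $(\Leftarrow)$: if $\pi\colon M\to \T^q$ is a fibration, set $\F$ to be the foliation by the (connected components of the) fibers and let $g_Q$ be the pull-back of the flat metric on $\T^q$. The transverse Levi-Civita connection $\N$ is then the pull-back of the Levi-Civita connection of $\T^q$. Since $\T^q=\R^q/\Z^q$ is flat with trivial holonomy, the same is true for $\N$.

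For the converse, assume $\mathrm{Hol}(\N)=0$. Then $\N$ is flat and $Q^*$ admits a global $\N$-parallel orthonormal coframe $\theta^1,\dots,\theta^q$. Each $\theta^i$ is a basic $1$-form: by construction $X\lrcorner \theta^i=0$ for $X\in\Gamma(L)$, and $X\lrcorner d\theta^i=0$ follows from the local description of $\N$ as the pull-back of the Levi-Civita connection on a local base (parallel $1$-forms on a Riemannian manifold are closed). Thus we dispose of $q$ closed basic $1$-forms that trivialize $Q^*$.

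Lift $\theta^i$ to the universal cover $p\colon \wt{M}\to M$; since $\wt{M}$ is simply-connected we may write $p^*\theta^i=df^i$ for some $f^i\in C^\infty(\wt{M})$. Define $F=(f^1,\dots,f^q)\colon \wt{M}\to \R^q$. Because $\pi_1(M)$ acts by deck transformations preserving each $p^*\theta^i$, for every $\gamma\in\pi_1(M)$ there exists $c(\gamma)\in\R^q$ with $F\circ\gamma=F+c(\gamma)$, and $c\colon \pi_1(M)\to\R^q$ is a homomorphism. Let $\Gamma:=c(\pi_1(M))$ and let $\ov{\Gamma}$ denote its closure in $\R^q$. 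Then $F$ descends to a smooth map $\ov F\colon M\to \R^q/\ov\Gamma$.

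The main step is to verify that $\R^q/\ov\Gamma\cong \T^q$. On the one hand, $dF$ has rank $q$ at every point because $\{\theta^i\}$ trivializes $Q^*$, so $\ov F$ is a submersion and its image is open. On the other hand, $M$ is compact, hence $\ov F(M)$ is compact and thus closed. Connectedness of $\R^q/\ov\Gamma$ forces $\ov F$ to be surjective, so $\R^q/\ov\Gamma$ is compact. Every closed subgroup of $\R^q$ has the form $V\oplus\Lambda$ with $V$ a linear subspace and $\Lambda$ a discrete lattice in a complement; compactness of the quotient yields $\R^q/\ov\Gamma\cong \T^{q-\dim V}$. Since $\ov F$ has rank $q$, this quotient must have dimension $\geq q$, forcing $\dim V=0$ and $\ov\Gamma$ to be a full rank lattice $\cong \Z^q$. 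Consequently $\ov F\colon M\to \T^q$ is a surjective submersion from a compact manifold, hence proper, hence a locally trivial fibration by Ehresmann's theorem. The fibers are saturated by the leaves of $\F$ (as each $\theta^i$ annihilates $L$) and, being of codimension $q$, recover $\F$ as the foliation by connected components of the fibers. The main obstacle is precisely the analysis of $\ov\Gamma$, which is resolved by combining the submersion property of $\ov F$ with the compactness of $M$.
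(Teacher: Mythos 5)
Your first two paragraphs match the paper's starting point: triviality of ${\rm Hol}(\N)$ gives a global parallel orthonormal coframe of $Q^*$, i.e.\ $q$ closed basic $1$-forms that are pointwise linearly independent. (The paper then simply observes that this makes $\F$ an $\R^q$-Lie foliation and cites Godbillon for the fibration over $\T^q$.) The gap is in your analysis of the period group. You write $\ov{\Gamma}=V\oplus\Lambda$ and then argue ``since $\ov F$ has rank $q$, the quotient must have dimension $\geq q$, forcing $\dim V=0$.'' But $\ov F$ takes values in $\R^q/\ov{\Gamma}$, a manifold of dimension $q-\dim V$, so its rank is at most $q-\dim V$; it is $F$ on the universal cover that has rank $q$, and that property does not descend to constrain $\dim V$. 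Indeed $\dim V>0$ genuinely occurs: take $M=\T^2$ with the linear foliation of irrational slope $\alpha$, which is a codimension-one Riemannian foliation with trivial transverse holonomy. The parallel coframe is (a multiple of) $\theta=dy-\alpha\,dx$, whose period group $\Z+\alpha\Z$ is dense in $\R$, so $\ov{\Gamma}=\R$ and your $\ov F$ collapses to a point. This is consistent with the leaves being dense: they can never be the fibers of a fibration, so no argument that tries to realize the fibration with fibers saturated by $\F$ (as your last sentence asserts) can work in general.

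The correct route --- and the content of the Godbillon/Tischler result the paper invokes --- is a perturbation: since $M$ is compact, approximate each closed form $\theta^i$ by a closed form $\tilde\theta^i$ whose periods lie in $\Q$ (rational classes are dense in $H^1(M;\R)$, and one adjusts by an exact form), keeping the $q$-tuple pointwise linearly independent because that is an open condition; after rescaling to integral periods the map $M\to\R^q/\Z^q=\T^q$ is well defined and is a submersion from a compact manifold, hence a locally trivial fibration by Ehresmann. The price is that the resulting fibration is only $C^0$-close to $\F$ and its fibers are in general not unions of leaves, which is exactly what the statement of the theorem (``$M$ is the total space of a fibration over $\T^q$'') allows for. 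With this replacement of your third and fourth paragraphs the argument goes through.
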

\begin{proof}
Since the transverse  holonomy group is trivial, there exists a global parallel orthonormal frame $\{s_i\}_{i=1,\cdots,q}$ of sections of $Q$. Thus the $1$-forms $\omega_i$ dual to $s_i$ regarded as $1$-forms on $M$ are  $d$-closed and linearly independent at each point. That means the foliation $\mathcal{F}$ is an $\R^q$-Lie foliation and hence in view of \cite[p.154]{Go}  $M$ is the total space of a fibration over the flat torus.
\end{proof}

\smallskip
Now we consider Sasakian manifolds with trivial transverse holonomy. The key pattern is the following:\\
\noindent Let $\mathfrak{h}_{2n+1}$ be the $2n+1$-dimensional Heisenberg Lie algebra  whose structure equations are given by the choose of a cobasis $\{e^i\}$ satisfying
$$
\begin{cases}
& de^k=0\,,\quad k=1,\dots, 2n\,,\\
& de^{2n+1}=e^1\wedge e^2+e^{3}\wedge e^4+\dots e^{2n-1}\wedge e^{2n}\,.
\end{cases}
$$
(Shortly $\mathfrak{h}_{2n+1}$ has structure equations $(0,\dots,0,12+\dots+(n-1)n)$).
The simply-connected Lie group ${\rm H}$ associated to $\mathfrak{h}_{2n+1}$ has the natural
invariant Sasakian structure
$$
\xi=e_{2n+1}\,,\quad \eta=e^{2n+1}\,,\quad g=\sum e^k\otimes e^k\,,\quad \Phi=e^1\otimes e_2-e^2\otimes e_1+\dots +e^{2n-1}\otimes e_{2n+1}- e^{2n+1}\otimes e_{2n-1}
$$
$\{e_i\}$ is being the dual basis to $\{e^i\}$. It is standard to check that such a Sasakian structure satisfies
${\rm Hol}(\N)=0$. Hence if $\Gamma\subseteq {\rm H}$ is a co-compact lattice, the compact manifold $M=\Gamma \backslash {\rm H}$ inherits a natural $\N$-flat Sasakian structure.

The next result says that every $\N$-flat compact Sasakian manifold is of the form $M=\Gamma\backslash {\rm H}$, for some lattice $\Gamma$.

\begin{theorem}\label{1}
Let $(M,\xi,\eta,\Phi,g)$ be a compact Sasakian manifold. Then the holonomy group of $\N$
is trivial if and only if $M$ is a compact quotient of the odd-dimensional Heisenberg Lie group ${\rm H}$ by a lattice and
$(\xi,\eta,\Phi,g)$ lifts to an invariant Sasakian structure on ${\rm H}$.
\end{theorem}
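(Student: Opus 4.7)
\medskip
\textbf{Proof proposal.} The ``if'' direction is essentially already recorded in the paragraph preceding the statement, since the invariant frame on $\mathrm{H}$ is $\nabla$-parallel and this property descends to every compact quotient. We concentrate on the ``only if'' direction. Assume $(M,\xi,\eta,\Phi,g)$ is a compact Sasakian manifold with $\mathrm{Hol}(\nabla)=0$. Applying the previous theorem to the Reeb foliation $\mathcal{F}$, which has codimension $2n$, produces a global $\nabla$-parallel orthonormal frame $s_1,\dots,s_{2n}$ of $Q$. Since $\Phi|_Q$ and $g_Q$ are both $\nabla$-parallel, the matrix of $\Phi|_Q$ in $\{s_i\}$ is constant, and after a constant orthogonal change of basis (which preserves parallelism) we may assume $\Phi s_{2k-1}=s_{2k}$ and $\Phi s_{2k}=-s_{2k-1}$ for $k=1,\dots,n$. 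Lift each $s_i$ to $\tilde s_i\in\Gamma(\xi^\perp)$ through the canonical identification $Q\simeq\xi^\perp$.

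Next I would compute the brackets of the $(2n{+}1)$-tuple $\{\xi,\tilde s_1,\dots,\tilde s_{2n}\}$. Vanishing of the transverse torsion together with $\nabla s_i=0$ gives $[\tilde s_i,\tilde s_j]_Q=0$, hence
\[
[\tilde s_i,\tilde s_j]=-d\eta(\tilde s_i,\tilde s_j)\,\xi=-2\,g(\Phi s_i,s_j)\,\xi,
\]
a constant multiple of $\xi$; with our normalization only the pairs $(2k-1,2k)$ contribute. For the mixed brackets, $[\xi,\tilde s_i]_Q=\nabla_\xi s_i=0$ forces $[\xi,\tilde s_i]=h_i\,\xi$ for some function $h_i$; evaluating $\eta$ and using the Sasakian identity $\xi\lrcorner d\eta=0$ gives $h_i=-d\eta(\xi,\tilde s_i)=0$. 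Thus the span $\mathfrak{g}$ of $\{\xi,\tilde s_1,\dots,\tilde s_{2n}\}$ is closed under Lie bracket with \emph{constant} structure constants, and a harmless rescaling of $\xi$ identifies $\mathfrak{g}$ with $\mathfrak{h}_{2n+1}$.

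Compactness of $M$ makes the vector fields $\xi,\tilde s_i$ complete, so Palais' theorem integrates the Lie algebra homomorphism $\mathfrak{h}_{2n+1}\to\mathfrak{X}(M)$ to a smooth action of the simply-connected Heisenberg group $\mathrm{H}$ on $M$. Since $\{\xi,\tilde s_i\}$ span $T_pM$ at every point, the action is locally, hence globally, transitive, and $M\cong\Gamma\backslash\mathrm{H}$ for $\Gamma$ the discrete stabilizer; compactness of $M$ forces $\Gamma$ to be a co-compact lattice. Finally, the Sasakian tensors $(\xi,\eta,\Phi,g)$ have constant components in the $\mathrm{H}$-invariant frame $\{\xi,\tilde s_i\}$, so they are the push-forward of the canonical left-invariant Sasakian structure on $\mathrm{H}$ described at the beginning of the section.

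The main technical hurdle is the passage from a Lie algebra of vector fields to a genuine global $\mathrm{H}$-action: completeness comes for free from compactness, after which Palais' theorem (finite-dimensional Lie algebra of complete vector fields $\Rightarrow$ action of the associated simply-connected Lie group) supplies what is needed. Once this is in hand the identification with a Heisenberg quotient and the invariance of the Sasakian structure follow directly from the bracket computation.
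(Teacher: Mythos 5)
Your argument is correct and follows essentially the same route as the paper: trivial transverse holonomy yields a global parallel frame of $Q$, the transverse torsion-free condition and $d\eta$ give constant structure constants identifying the resulting Lie algebra with $\mathfrak{h}_{2n+1}$, and Palais' theorem then realizes $M$ as a compact quotient of ${\rm H}$ carrying the invariant structure. The only difference is cosmetic: you work with a real parallel frame and spell out the bracket computations, while the paper phrases the same computation in terms of a parallel unitary frame $\{Z_r\}$.
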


\begin{proof}
Let $(M,\xi,\eta,\Phi,g)$ be a Sasakian manifold. The holonomy group of  $\N$ is trivial if and only if there exists a global transverse unitary frame $\{Z_r\}$ satisfying
$$
\N_{\ov{Z}_r}Z_k=\N_{Z_r}Z_k=\N_{\xi}Z_k=\N_{\xi}\ov{Z}_k=0\,,\quad r,k=1,\dots,n\,.
$$
Conditions $\N_{Z_r}Z_k=\N_{Z_r}{\ov Z}_k=0$ say that
$$
\N_{Z_r}^gZ_k=0\,,\quad \N_{\ov{Z}_r}^gZ_k=-i\delta_{rk}\,\xi
$$
whilst condition $\N_{\xi}Z_k=0$ can be rewritten in terms of brackets as
$$
[Z_k,\xi]=0\,,\quad k=1,\dots,n\,.
$$
It follows that there exists a frame $\{X_i\}$ on $M$ such that $[X_i,X_j]=\sum \lambda_{ij}^k\, X_k$ for some constants $\lambda_{ij}^k$. In view of \cite{palais}, $M$ can be written as a quotient of a simply-connected nilpotent Lie group $N$ by a lattice.
The vector fields $\{Z_i,\xi\}$ lift to invariant vector fields on $N$. Let $\{\zeta^i,\eta\}$, be the dual frame to
$\{Z_i,\xi\}$; then
$$
d\zeta^i=0\,,\quad d\eta=i\,\sum_k \zeta^{k}\wedge \ov{\zeta}^k
$$
and $N$ is the Heisenberg Lie group ${\rm H}$, as required.
\end{proof}

\begin{rem}{\em
Notice that from the point of view of transverse geometry, manifolds associated to the Heisenberg group play in Sasakian geometry the role that complex tori play in K\"ahler geometry.}
\end{rem}

\subsection{Sasakian manifolds with ${\rm Hol}(\N)\subseteq{\rm SU}(n)$}\label{sakSu(n)}
In \cite{TV} it has been  studied the geometry of Sasakian manifolds satisfying ${\rm Hol}(\nabla)\subseteq {\rm SU}(n)$. These manifolds were named {\em contact Calabi-Yau}.
The main result of \cite{TV} is a generalization  of the McLean theorem (see \cite{Maclean}) to the Sasakian context, where the role of Lagrangian submanifolds was replaced by some special
Legendrian immersions. Indeed, a {\em contact Calabi-Yau} manifold can be defined as a $2n+1$-dimensional Sasakian manifold $(M,\xi,\eta,\Phi,g)$ with an addition structure given by a basic closed transverse complex volume form $\epsilon$. In analogy to the classical Calabi-Yau case, the real part of $\epsilon$ is a calibration on $M$ (see \cite{HL}) whose calibrated submanifolds are given by $n$-dimensional smooth embeddings $i\colon N\hookrightarrow M$ satisfying
$$
\begin{cases}
i^*(\eta)=0\\
i^*(\Im\mathfrak{m}\,\epsilon)=0\,.
\end{cases}
$$
Condition $i^*(\eta)=0$ says that $N$ is a {\em Legendrian} submanifold and for this reason such submanifolds were named in \cite{TV} {\em special Legendrian}.  The main result of \cite{TV} is the following:
\begin{theorem}[Tomassini-Vezzoni \cite{TV}]
The moduli space of compact Legendrian submanifolds isotopic to a fixed one is always a $1$-dimensional smooth manifold.
\end{theorem}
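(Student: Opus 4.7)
The proof follows McLean's strategy adapted to the contact Calabi--Yau setting. Let $N\hookrightarrow M$ be a fixed compact special Legendrian in a contact Calabi--Yau manifold $(M,\xi,\eta,\Phi,g,\epsilon)$ of dimension $2n+1$. The plan is to parameterize a neighborhood of $[N]$ in the moduli space by the zero set of a smooth map $F:U\subset C^\infty(N)\to C^\infty(N)$ whose linearization at the origin is a self-adjoint elliptic operator with one-dimensional kernel, and then to invoke the implicit function theorem.

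I would first analyze the normal bundle $\nu(N)$, which has rank $n+1$. Since $i^*\eta=0$, the Reeb direction $\xi$ is everywhere normal to $N$ and trivializes a rank-one subbundle; the complementary rank-$n$ piece sits inside $\xi^\perp$, where $N$ is Lagrangian for the transverse K\"ahler form $\omega=\tfrac12 d\eta$, and is therefore canonically isomorphic to $T^*N$ via $\omega$. A normal vector field accordingly decomposes as $V=f\xi+X$, with $X$ corresponding to a $1$-form $\alpha\in\Omega^1(N)$ under $\omega$. Applying $\mathcal{L}_V\eta=d(i_V\eta)+i_Vd\eta$ and restricting to $N$ yields $i^*(\mathcal{L}_V\eta)=df-\alpha$, so the linearized Legendrian condition forces $\alpha=df$. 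Exponentiating along the normal exponential map, this identifies the space of nearby Legendrian submanifolds with an open set $U\subset C^\infty(N)$, parameterized by the generating function $f$.

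Next, the calibration condition $i_f^*(\Im\mathfrak{m}\,\epsilon)=0$ pulls back under this parameterization to a fully nonlinear scalar equation $F(f)=0$ on $N$ with $F(0)=0$. To compute $L:=DF(0)$ I would use $\mathcal{L}_V(\Im\mathfrak{m}\,\epsilon)=d(i_V\,\Im\mathfrak{m}\,\epsilon)$, valid because $\epsilon$ is closed, together with the parallelism $\N\epsilon=0$ and the splitting $V=f\xi+(df)^\sharp$. The standard McLean-type computation, transcribed to the Sasakian setting and using that $i^*\Re\mathfrak{e}\,\epsilon$ agrees with the Riemannian volume form of $N$ (since $N$ is calibrated), should identify $L$ up to a nonzero constant with the Hodge--de~Rham Laplacian $\Delta_N$ on $C^\infty(N)$. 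This operator is elliptic, self-adjoint, and of index zero on the compact manifold $N$, and its kernel is exactly the constant functions.

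Finally, I would apply the implicit function theorem to $F$ between H\"older spaces $C^{k,\alpha}(N)\to C^{k-2,\alpha}(N)$. Because $\Delta_N$ is self-adjoint, the cokernel of $L$ matches its one-dimensional kernel, and the obstruction is killed geometrically by the Reeb flow $\phi_t$: it preserves $\eta$, and since $\epsilon$ is basic and closed we have $\mathcal{L}_\xi\epsilon=\xi\lrcorner d\epsilon+d(\xi\lrcorner\epsilon)=0$, so $\{\phi_t(N)\}$ is an explicit one-parameter family of compact special Legendrians through $N$ realizing the full local slice. Elliptic regularity then upgrades solutions to smooth ones, and continuity in $t$ guarantees that the isotopy class is preserved. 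I expect the main obstacle to be the calibration computation in the previous paragraph, namely the honest identification of the linearization with $\Delta_N$ on functions; this requires carefully combining $\N\epsilon=0$, the Lagrangian splitting of $\nu(N)$ inside $\xi^\perp$, and the cancellation of terms involving the Reeb component $f\xi$. Once this is in hand, elliptic theory and the existence of the Reeb-flow deformation conspire to yield a smooth $1$-dimensional moduli space.
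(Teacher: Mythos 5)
First, note that the paper you are working from does not prove this statement at all: it is quoted from \cite{TV} as background (and, incidentally, the word \lq\lq special\rq\rq\ has been dropped from the statement --- the result concerns compact \emph{special} Legendrian submanifolds). So the only comparison available is with the argument of \cite{TV}, which is indeed the McLean-type scheme you outline: Legendrian deformations are generated by a single function $f$ through $V=f\xi+(df)^{\sharp}$, the calibration condition $i_f^*(\Im\mathfrak{m}\,\epsilon)=0$ becomes a nonlinear scalar equation whose linearization is (a multiple of) the Laplacian on $N$, and the single dimension of the moduli space is the constants, i.e.\ the Reeb direction. Your overall strategy is the right one.

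There are, however, two genuine gaps. The serious one is the cokernel step. Since $\Delta_N$ has one-dimensional kernel \emph{and} cokernel, $DF(0)$ is not surjective onto $C^{k-2,\alpha}(N)$, and exhibiting the explicit family $\phi_t(N)$ of Reeb translates does \emph{not} show that the zero set of $F$ is a smooth $1$-manifold: a priori that zero set could be singular, or strictly larger than the curve you wrote down, so the Reeb flow cannot \lq\lq kill the obstruction\rq\rq\ by itself. What is needed is the observation that $F$ automatically takes values in the $L^2$-orthogonal complement of the cokernel: because $\Im\mathfrak{m}\,\epsilon$ is closed and $i^*(\Im\mathfrak{m}\,\epsilon)=0$, the form $i_f^*(\Im\mathfrak{m}\,\epsilon)$ is exact for every nearby deformation, hence $\int_N F(f)\,v_g=0$; restricting the target to mean-zero functions makes $DF(0)$ surjective, the implicit function theorem applies, and the kernel (the constants) gives the dimension $1$. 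The second, more minor, gap is the parameterization: the normal exponential of $f\xi+(df)^{\sharp}$ is not Legendrian for finite $f$ --- you only verified the linearized condition --- so one must either invoke a Legendrian neighbourhood theorem identifying nearby Legendrians with $1$-jets of functions on $N$, or carry the Legendrian equation along as part of the nonlinear system. Finally, the identification of $DF(0)$ with $\Delta_N$, which you yourself flag as the main obstacle, is the computational heart of \cite{TV} and cannot be waved through; but granting it, the cohomological fix above is what actually closes the argument.
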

\noindent Some further results about special Legendrian submanifolds with boundary are pointed out in \cite{lucking}.

\medskip
In this section, we observe that links provide examples of simply-connected compact contact Calabi-Yau manifolds. In order to show this, we consider the following two results arising from section \ref{sectionSU(n)} and the Sasakian version of the El Kacimi theorem (see \cite{BGN}):

\begin{prop}\label{SU(n)Sak}
Let $(M,\xi,\eta,\Phi,g)$ be a  compact simply-connected null Sasaki $\eta$-Einstein manifold. Then ${\rm Hol}(\N)$
is contained in ${\rm SU}(n)$.
\end{prop}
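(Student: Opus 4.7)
The plan is to reduce this statement directly to Proposition \ref{SU(n)} applied to the characteristic (Reeb) foliation of the Sasakian structure. Recall that every Sasakian $(M,\xi,\eta,\Phi,g)$ of dimension $2n+1$ carries a natural Kähler foliation of codimension $2n$: the Reeb foliation $\F$ generated by $\xi$ is Riemannian (since $\xi$ is Killing), and the restriction of $\Phi$ to $\xi^\perp$ together with $g_Q:=g|_{\xi^\perp}$ gives a transverse Kähler structure. This was already recorded in the preliminaries of the paper.

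The crucial point is then to verify that the null Sasaki $\eta$-Einstein hypothesis is exactly the vanishing of the transverse Ricci form $\rho_B$ of this Kähler foliation. For a Sasakian manifold one has the standard relation
$$
{\rm Ric}^{g}(X,Y)={\rm Ric}^{\nabla}(X,Y)-2\,g(X,Y)
$$
for $X,Y\in \Gamma(\xi^\perp)$, together with ${\rm Ric}^{g}(\xi,\cdot)=2n\,\eta$. An $\eta$-Einstein structure is by definition one for which ${\rm Ric}^{g}=\lambda g+\nu\,\eta\otimes\eta$ with $\lambda+\nu=2n$, and the null case corresponds to $\lambda=-2$. Substituting this in the formula above yields ${\rm Ric}^{\nabla}\equiv 0$ on $\xi^\perp$, equivalently $\rho_B=0$.

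Once the first basic Chern class of $(\F,J)$ is represented by the zero form, the hypotheses of Proposition \ref{SU(n)} are met: $M$ is compact simply-connected, $(\F,g_Q,J)$ is a Kähler foliation, and $\rho_Q=0$. That proposition then concludes that ${\rm Hol}(\nabla)\subseteq{\rm SU}(n)$, which is exactly what is asked. No step here is a genuine obstacle; the only point deserving care is the sign constant in the null $\eta$-Einstein condition, which is fixed by the standard Sasakian identity above and is consistent with the conventions of \cite{BGN}.
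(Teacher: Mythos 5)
Your proposal is correct and follows exactly the route the paper intends: the paper states Proposition \ref{SU(n)Sak} without a written proof, presenting it as a direct consequence of Proposition \ref{SU(n)} applied to the Reeb foliation, after recalling that ``null $\eta$-Einstein'' means precisely that the transverse Ricci tensor vanishes. Your additional verification that the null condition ($\lambda=-2$ in ${\rm Ric}^g=\lambda g+\nu\,\eta\otimes\eta$) is equivalent to ${\rm Ric}^{\nabla}=0$ via the standard Sasakian identity is a correct unwinding of that definition, so the two arguments coincide.
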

Here we recall that in Sasakian geometry {\em null $\eta$-Einstein} means that the transverse Ricci tensor vanishes.
\begin{prop}\label{imp}
Let $(M^{2n+1},\xi,\eta,\Phi,g)$ be a compact simply-connected Sasaki manifold with $c_B^1(\F)=0$. Then there exists a basic $1$-form $\zeta$ on $(M,\xi)$ and a unique Sasakian structure $(M,\xi',\eta',\Phi',g')$ such that
$$
\xi'=\xi\,,\quad \eta'=\eta+\zeta\,,\quad \Phi'=\Phi-\zeta\otimes\xi\circ\Phi\,,\quad g'=d\eta'\circ({\rm Id}\otimes \Phi')+\eta'\otimes \eta'
$$
and the transverse holonomy group of the metric $g'$ is contained in ${\rm SU}(n).$
\end{prop}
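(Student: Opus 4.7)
The plan is to combine the Sasakian transverse Calabi-Yau theorem of Boyer-Galicki-Nakamaye \cite{BGN} with Proposition \ref{SU(n)}. Since $M$ is compact and simply-connected, the Reeb foliation $\F$ is automatically taut, carries a transverse K\"ahler structure $(g_Q,J)$ with $J=\Phi|_Q$, and satisfies $c_B^1(\F)=0$ by hypothesis. The BGN theorem (the Sasakian analogue of theorem \ref{transvcalabiyau}) then provides a basic function $\phi$ such that the deformed contact form
$$
\eta':=\eta+d^c_B\phi,\qquad d^c_B\phi:=-d_B\phi\circ\Phi,
$$
determines, together with $\xi'=\xi$, a new Sasakian structure $(\xi,\eta',\Phi',g')$ on $M$ whose transverse K\"ahler form $\omega'=\tfrac{1}{2}d\eta'|_{\ker\eta'}$ lies in the same basic cohomology class as $\omega=\tfrac{1}{2}d\eta|_{\ker\eta}$ and has vanishing transverse Ricci form (this is the type-II, or transverse K\"ahler, deformation of a Sasakian structure, which keeps the Reeb vector field fixed).

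I would then set $\zeta:=d^c_B\phi$. This $\zeta$ is a basic $1$-form, since $\xi\lrcorner\zeta=0$ and $d\zeta=dd^c_B\phi=2i\partial_B\bar\partial_B\phi$ is itself basic. The tensor $\Phi'$ is uniquely characterized by the Sasakian compatibility conditions $\Phi'\xi=0$, $\eta'\circ\Phi'=0$, and $(\Phi')^2=-\mathrm{Id}+\xi\otimes\eta'$; a short algebraic verification, relying only on $\Phi\xi=0$, $\eta\circ\Phi=0$ and $\zeta(\xi)=0$, shows that the formula
$$
\Phi'=\Phi-\zeta\otimes(\xi\circ\Phi)
$$
stated in the proposition satisfies all three conditions, and is therefore the correct $\Phi'$. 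The metric $g'$ is then reconstructed via the standard Sasakian identity $g'=d\eta'\circ(\mathrm{Id}\otimes\Phi')+\eta'\otimes\eta'$, and by construction its restriction to $\ker\eta'$ realizes the transverse K\"ahler metric associated with $\omega'$.

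Finally, since the transverse Ricci form of $g'$ vanishes, Proposition \ref{SU(n)} yields $\mathrm{Hol}(\nabla')\subseteq\mathrm{SU}(n)$, using once again the simply-connected hypothesis on $M$. Uniqueness follows from the uniqueness part of the BGN theorem: the basic K\"ahler class of $\omega$ is determined by $\eta$, and its Ricci-flat representative is unique, which pins down $\phi$ up to an additive constant and hence pins down $\zeta=d^c_B\phi$, $\Phi'$ and $g'$. The main subtlety I anticipate is the careful bookkeeping of conventions needed to match the type-II deformation $\eta\mapsto\eta+d^c_B\phi$ coming from \cite{BGN} with the explicit formulas for $\eta'$, $\Phi'$ and $g'$ appearing in the statement; once the sign conventions for $d^c_B$ and for the transverse Hodge star are fixed, both the algebraic identities above and the identification of $\omega'$ with the transverse K\"ahler form of the deformed structure become routine.
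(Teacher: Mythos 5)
Your proposal is correct and follows essentially the same route the paper intends: the paper gives no detailed proof of Proposition \ref{imp}, presenting it as a direct consequence of the Sasakian version of El Kacimi's theorem from \cite{BGN} (the type-II deformation $\eta'=\eta+\zeta$ with transversally Ricci-flat K\"ahler form) combined with Proposition \ref{SU(n)} under the simply-connectedness hypothesis. Your write-up simply supplies the bookkeeping (the choice $\zeta=d^c_B\phi$, the verification of the compatibility conditions for $\Phi'$, and the uniqueness via the uniqueness of the Ricci-flat representative) that the paper leaves implicit.
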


It is known that Links  provide examples of simply-connected null Sasakian $\eta$-Einstein manifolds.
More precisely, given a link $L_f=C_{f}\cap S^{2n-1}$, where $f=(f_1,\dots, f_p)$ are independent weighted homogeneous polynomials of degrees
$(d_1,\dots,d_p)$ and weights $(w_1,\dots, w_p)$, then  $L_f$
is $(n-p-1)$-connected (see \cite{L}) and inherits a natural $\eta$-Einstein Sasakian structure $(\xi,\eta,\Phi,g)$ induced by the weighted Sasakian
structure of the sphere.  Since $L_f$ is null whenever $\sum (d_i-w_i)=0$ (see e.g. \cite{BGlibro}), by making use of proposition \ref{imp}, we infer

\begin{prop}\label{links}
Let $L_f$ be a link, where $f=(f_1,\dots, f_p)$ are independent weighted homogeneous polynomials of degrees
$(d_1,\dots,d_p)$ and weights $(w_1,\dots, w_p)$ and assume $\sum (d_i-w_i)=0$. Then $L_f$ carries a contact Calabi-Yau structure.
\end{prop}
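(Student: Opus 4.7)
The plan is to verify the hypotheses of Proposition \ref{imp} for the link $L_f$ and then read off the contact Calabi--Yau structure from the resulting transverse ${\rm SU}$-reduction. Throughout set $m:=n-p-1$, so that $L_f$ has real dimension $2m+1$ and transverse complex dimension $m$.

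First I would record the topological input: by the connectedness statement cited as \cite{L}, $L_f$ is $(n-p-1)$-connected, so in the meaningful range $m\ge 1$ it is in particular simply-connected. Next, as recalled just before the proposition, the weighted Sasakian structure on $S^{2n-1}$ restricts to $L_f$ to give a natural $\eta$-Einstein Sasakian structure $(\xi,\eta,\Phi,g)$.

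The key geometric step is to invoke the fact from \cite{BGlibro} that the arithmetic condition $\sum(d_i-w_i)=0$ forces $(L_f,\xi,\eta,\Phi,g)$ to be null, meaning that its transverse Ricci tensor vanishes identically. In particular the transverse Ricci form $\rho_B$ vanishes, hence $c_B^1(\F)=[\rho_B]=0$ in $H_B^2(L_f,\R)$. With $L_f$ compact and simply-connected and $c_B^1(\F)=0$, Proposition \ref{imp} then produces a modified Sasakian structure $(\xi,\eta',\Phi',g')$ on $L_f$, sharing the same Reeb vector field, whose transverse Levi-Civita connection $\nabla'$ satisfies ${\rm Hol}(\nabla')\subseteq {\rm SU}(m)$.

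This ${\rm SU}(m)$-reduction yields a nowhere vanishing, $\nabla'$-parallel basic form $\epsilon\in\Lambda_B^{m,0}(L_f)$; since $\nabla'$ has vanishing transverse torsion, such an $\epsilon$ is automatically $d_B$-closed, and the resulting data $(\xi,\eta',\Phi',g',\epsilon)$ realize precisely the definition of a contact Calabi--Yau structure recalled at the start of this subsection. The only genuinely non-routine ingredient in this plan is the passage from the arithmetic condition $\sum(d_i-w_i)=0$ to the vanishing of $c_B^1(\F)$, which is the transverse analogue of the adjunction formula for a weighted homogeneous complete intersection in $\C^n$ and is carried out in \cite{BGlibro}; every other step is either a direct citation or a one-line consequence of the earlier results of the paper.
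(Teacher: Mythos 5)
Your proposal follows essentially the same route as the paper: $(n-p-1)$-connectedness from \cite{L}, the induced $\eta$-Einstein Sasakian structure from the weighted sphere, nullity (hence $c_B^1(\F)=0$) from $\sum(d_i-w_i)=0$ via \cite{BGlibro}, and then proposition \ref{imp} to obtain the transverse ${\rm SU}$-holonomy reduction. Your closing paragraph merely makes explicit the extraction of the closed basic transverse volume form, which the paper leaves implicit, so the argument is correct and matches the intended one.
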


\subsubsection{A link with $G_2$-geometry} Let $(M,\xi,\eta,\Phi,g,\epsilon)$ be a $7$-dimensional {\em contact Calabi-Yau manifold} and consider the $3$-form
$$
\sigma=\eta\wedge d\eta+\Re\mathfrak e\,\epsilon.
$$
Then $\sigma$ induced a $G_2$-structure on $M$. Since
$$
d*\sigma= 0
$$
this induced $G_2$-structure is always {\em co-calibrated}. A similar construction can be done in  $7$-dimensional $3$-Sasakian manifolds (see \cite{AF}). Therefore, proposition \ref{links} readily implies
\begin{cor}
Let $L_f$ be a link as in proposition  $\ref{links}$. Then $L_f$ has a co-calibrated $G_2$-structure.
\end{cor}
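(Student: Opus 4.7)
The plan is to directly combine Proposition \ref{links} with the construction of the $G_2$-form $\sigma = \eta\wedge d\eta + \Re\mathfrak{e}\,\epsilon$ recalled immediately above the corollary. Proposition \ref{links} yields a contact Calabi-Yau structure $(\xi,\eta,\Phi,g,\epsilon)$ on $L_f$; under the tacit assumption that $L_f$ is $7$-dimensional (otherwise the construction is empty, since the $G_2$-model only makes sense in dimension $7$), the transverse complex dimension is $3$ and $\epsilon$ is a basic $(3,0)$-form.

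First I would check that $\sigma$ is a $G_2$-form pointwise. This is a purely algebraic statement, verified by selecting a local orthonormal coframe $(\eta, e^1,\dots,e^6)$ adapted to $(\Phi,\epsilon)$ so that $d\eta$ agrees with twice the standard transverse K\"ahler form and $\epsilon = (e^1+ie^2)\wedge(e^3+ie^4)\wedge(e^5+ie^6)$. Then $\sigma$ assumes the standard expression of a positive $G_2$-three-form on $\mathbb{R}^7$, so it defines a $G_2$-structure on $L_f$.

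The only global step is to verify $d{*}\sigma=0$. Using the adapted coframe one computes
\[
{*}\sigma \;=\; \frac{1}{2}(d\eta)^2 + \eta\wedge \Im\mathfrak{m}\,\epsilon ,
\]
so that
\[
d{*}\sigma \;=\; \frac{1}{2}\,d\!\left((d\eta)^2\right) + d\eta\wedge \Im\mathfrak{m}\,\epsilon - \eta\wedge d\,\Im\mathfrak{m}\,\epsilon .
\]
The first term vanishes because $d\eta$ is closed; the third term vanishes because $\epsilon$ is basic and closed; the middle term vanishes for bidegree reasons, since $d\eta$ is a basic $(1,1)$-form while $\epsilon$ is a basic $(3,0)$-form, so their wedge product is a basic $(4,1)$-form which is identically zero in transverse complex dimension $3$. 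Thus $d{*}\sigma=0$, and the induced $G_2$-structure is co-calibrated.

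The only mild obstacle is the identification of ${*}\sigma$ above, which reduces to a routine linear-algebra computation in the adapted coframe (using that the Sasakian Hodge star satisfies ${*}(\eta\wedge \alpha)={*}_B\,\alpha$ for basic $\alpha$ and that in the transverse Calabi-Yau $6$-fold one has ${*}_B\,\Re\mathfrak{e}\,\epsilon=\Im\mathfrak{m}\,\epsilon$ and ${*}_B\,d\eta=\tfrac{1}{2}(d\eta)^2$); everything else is immediate from Proposition \ref{links} and the closure of $\epsilon$ and $d\eta$.
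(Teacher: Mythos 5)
Your proposal is correct and follows the same route as the paper: the paper simply combines Proposition \ref{links} with the construction $\sigma=\eta\wedge d\eta+\Re\mathfrak{e}\,\epsilon$ stated just above the corollary, asserting $d{*}\sigma=0$ without computation. You supply the details the paper omits (the pointwise $G_2$-positivity, the expression for ${*}\sigma$, and the bidegree argument killing $d\eta\wedge\Im\mathfrak{m}\,\epsilon$), and you correctly flag the tacit restriction to $7$-dimensional links; the only caveat is that the exact normalization constants in ${*}\sigma$ depend on conventions, but this does not affect the closure argument.
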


\subsection{Sasakian manifolds with ${\rm Hol}(\N)\subseteq{\rm Sp}(n)$}In this subsection we translate theorem \ref{mainluigi} to the context of Sasakian manifolds.

\medskip
Condition ${\rm Hol}(\N)\subseteq {\rm Sp} (n)$ for a Sasakian manifold $(M,\xi,\eta,\Phi_1,g)$ is equivalent to require the existence  of a pair $\Phi_2,\Phi_3\in {\rm End} (TM)$ such that
\begin{equation}\label{sas}
(\xi,\eta,\Phi_k,g) \mbox{ is a Sasakian structure for } k=2,3
\end{equation}
and satisfy the transverse quaternionic relations
\begin{equation}\label{quater}
\Phi_1\Phi_2=-\Phi_2\Phi_1\,,\quad  \Phi_1\Phi_2=\Phi_3\,.
\end{equation}
Conditions \eqref{sas} can be alternatively rewritten as
\begin{eqnarray}
&& \label{3} \Phi_k^2=-{\rm I}+\eta\otimes \xi\\
&& \label{4} N_{\Phi_k}=0\\
&& g(\Phi_k \cdot,\Phi_k \cdot)=g(\cdot,\cdot)-\eta(\cdot)\eta(\cdot)
\end{eqnarray}
where $N_{\Phi_k}$ is the Nijenhuis tensor
$$
N_{\Phi_k}(X,Y)=[\Phi_kX,\Phi_kY]-\Phi_k[\Phi_kX,Y]-\Phi_k[X,\Phi_kX]+\Phi_k^2[X,Y]\,.
$$

%\begin{lemma}\label{final}
%Let $(M,\xi,\eta,\Phi,g)$ be a compact Sasakian manifold. Assume that there exists a complex basic form $\Omega=F_2+ i F_3$
%of type $(2,0)$ satisfying
%$$
%d\Omega=0\,,\quad \Omega^{n}\neq 0\,;
%$$
%then there exists a
%Sasakian structure $(\xi,\eta',\Phi',g')$ on $M$ having transverse holonomy contained in ${\rm Sp}(n)$.\end{lemma}
%\begin{proof}
%Since $\Omega^n$ is a nowhere vanishing basic $(2n,0)$-form on $M$, then $c_1^{B}=0$. Now in view of proposition  \ref{imp}, there exists a Sasakian structure $\mathcal{S}'=(\xi,\eta',\Phi',g')$ on $M$ such that
%$$
%\eta'=\eta+\zeta\,,\quad \Phi'=\Phi-\xi\otimes\zeta\circ\Phi\,,\quad g'=d\eta'\circ({\rm Id}\otimes \Phi')+\eta'\otimes \eta'
%$$
%and having transverse holonomy in ${\rm SU}(2n)$. The form $\Omega$ is still of type $(2,0)$ with respect to the new $\Phi'$ and proposition \ref{transvholomorphic} implies
%$$
%\N'\Omega=0
%$$
%where $\N'$ is the transverse Levi-Civita connection of $(g',\F)$.
%Hence ${\rm Hol}(\N')\subseteq{\rm Sp}(n)$, as required.
%\end{proof}

The following result is the generalization of theorem \ref{mainluigi} to the Sasakian context: 
\begin{theorem}\label{sasaki}
Let $(M,\xi,\eta,\Phi_1,g)$ be a compact simply-connected $4n+1$-dimensional Sasakian manifold.  Assume that  there exists  a pair $\{J_2,J_3\}$ of transverse complex structures inducing with $(g,\Phi_1)$ a transverse hyper-Hermitian structure. Then
there exists a Sasakian structure $(\xi,\eta',\Phi',g')$ on $M$ having transverse holonomy contained in 
${\rm Sp}(n)$.
\end{theorem}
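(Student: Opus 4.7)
The plan is to mimic the strategy used for Theorem \ref{mainluigi} but in the Sasakian setting, using Proposition \ref{imp} in place of El Kacimi's theorem to produce the intermediate ${\rm SU}(2n)$ structure.

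First I would build, out of the transverse hyper-Hermitian data, the basic complex $(2,0)$-form
$$
\Omega(\cdot,\cdot)=\tfrac{1}{2}\bigl(g_Q(J_2\cdot,\cdot)+ig_Q(J_3\cdot,\cdot)\bigr)
$$
with respect to $J_1=\Phi_1|_Q$. Since $(J_1,J_2,J_3)$ satisfy the quaternionic relations and $g_Q$ is Hermitian with respect to each $J_k$, the form $\psi:=\Omega^{n}$ is a nowhere vanishing basic $(2n,0)$-form on the transverse Calabi--Yau bundle. Together with the simply-connectedness of $M$, Remark \ref{remarkCY} then forces $c^{1}_B(\F)=0$.

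Next I would invoke Proposition \ref{imp}: starting from $(\xi,\eta,\Phi_1,g)$, it produces a new Sasakian structure $(\xi,\eta',\Phi',g')$ keeping the Reeb vector field $\xi$ fixed and modifying $\eta$ by a basic one-form $\zeta$, with the property that the transverse Levi--Civita connection $\nabla'$ of $g'_Q$ has holonomy contained in ${\rm SU}(2n)$. A key observation is that the transverse complex structure $J_1$ on $Q$ does not change: the modification $\Phi'=\Phi-\zeta\otimes(\xi\circ\Phi)$ only alters the Reeb component, so on $Q=TM/L$ we still have the same $J_1$. Hence $(J_1,J_2,J_3)$ remains a transverse hyper-complex structure on the \emph{same} underlying foliation $\F$, now equipped with the transverse Calabi--Yau metric $g'_Q$.

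At this point I would apply Lemma \ref{fundpre} to the data $(M,\F,g'_Q,J_1)$ together with the transverse hyper-complex structure $(J_1,J_2,J_3)$. The tautness hypothesis of Lemma \ref{fundpre} is automatic, since the Reeb foliation of a compact Sasakian manifold has minimal leaves ($\kappa=0$) and is thus taut. The lemma then yields ${\rm Hol}(\nabla')\subseteq {\rm Sp}(n)$, which is precisely the statement of the theorem; as a byproduct $g'_Q$ becomes automatically compatible with $J_2$ and $J_3$ so that $(g'_Q,J_1,J_2,J_3)$ is a genuine transverse hyper-Hermitian (indeed hyper-K\"ahler) structure.

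The only subtle point is to check in detail that Proposition \ref{imp} preserves the relevant transverse structure so that Lemma \ref{fundpre} is directly applicable: one must verify that the deformation $\eta\mapsto\eta'=\eta+\zeta$ does not affect the basic nature of $\Omega$ (i.e. the hyper-complex structure $(J_1,J_2,J_3)$ remains transverse with respect to the unchanged foliation $\F$), and that the taut condition is maintained. Both are straightforward, since the Reeb vector field is unchanged and the leaves are still geodesics of $g'$, so this step should not pose any real obstacle.
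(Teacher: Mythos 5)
Your proposal is correct and follows essentially the same route as the paper's proof: deduce $c^1_B(\F)=0$ from the nowhere vanishing basic $(2n,0)$-form $\Omega^n$ (via Remark \ref{remarkCY}), apply Proposition \ref{imp} to obtain the intermediate Sasakian structure with transverse holonomy in ${\rm SU}(2n)$, and conclude with Lemma \ref{fundpre}. The extra verifications you flag (that the transverse complex structures survive the deformation $\eta\mapsto\eta'$ and that tautness holds) are left implicit in the paper but are indeed routine.
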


\begin{proof}
The existence of $\{J_2,J_3\}$ implies that  the first basic Chern class of $(M,\xi,\eta,\Phi_1,g)$ is zero. Then in view of proposition  \ref{imp}, there exists a Sasakian structure $\mathcal{S}'=(\xi,\eta',\Phi',g')$ on $M$ 
$$
\eta'=\eta+\zeta\,,\quad \Phi'=\Phi-\xi\otimes\zeta\circ\Phi\,,\quad g'=d\eta'\circ({\rm Id}\otimes \Phi')+\eta'\otimes \eta'
$$
having transverse holonomy contained in ${\rm SU}(2n)$. Lemma \ref{fundpre} implies that the transverse Levi-Civita connection of $g'$ has holonomy contained in ${\rm Sp}(n)$, as required. 
\end{proof}

%\begin{proof} \textcolor{red}{The proof is obtained by combining Proposition \ref{imp} and Theorem \ref{mainluigi} and using the unicity of the transverse metric.}
%\end{proof}

Note that, since ${\rm Sp}(1)={\rm SU}(2)$,  corollary \ref{imp} implies that in dimension $5$  every simply-connected Sasakian manifolds satisfying ${\rm Hol}(\N)\subseteq {\rm Sp}(1)$  is in fact a compact  simply-connected
null Sasaki $\eta$-Einstein manifolds. These kind of manifolds are classified in \cite{cuadros} where it is showed
that a $5$-dimensional simply-connected compact manifold admits a null Sasaki $\eta$-Einstein structure if and only if
it is obtained as a connected sum  of $k$-copies of
$S^2\times S^3$, where $k=3,\dots,9$.

\end{document}